\newtheorem{thm}{Theorem}[section]
\newtheorem{prop}[thm]{Proposition}
\newtheorem{lemma}[thm]{Lemma}
\newtheorem{cor}[thm]{Corollary}
\numberwithin{equation}{section}
\newcommand{\coker}{\operatorname{coker}}
\newcommand{\Aut}{\operatorname{Aut}}
\newcommand{\rk}{\operatorname{rk}}
\newcommand{\Hom}{\operatorname{Hom}}
\newcommand{\End}{\operatorname{End}}
\newcommand{\GL}{\operatorname{GL}}
\newcommand{\im}{\operatorname{Im}}
\newcommand{\Pic}{\operatorname{Pic}}
\newcommand{\Ol}{\mathcal{O}}
\newcommand{\A}{\mathcal{A}}
\newcommand{\M}{\mathcal{M}}
\newcommand{\E}{\mathcal{E}}
\newcommand{\U}{\mathcal{U}}
\newcommand{\V}{\mathcal{V}}
\newcommand{\W}{\mathcal{W}}
\newcommand{\T}{\mathcal{T}}
\newcommand{\tot}{\operatorname{Tot}}
\newcommand{\Spec}{\operatorname{Spec}}
\newcommand{\Mk}{\M^n(r,a,c)}
\newcommand{\Pk}{P_{\vec{k}}}
\newcommand{\Lk}{L_{\vec{k}}}
\newcommand{\Gk}{G_{\vec{k}}}
\newcommand{\Z}{\mathbb{Z}}
\newcommand{\Com}{\mathbb{C}}
\newcommand{\Pu}{\mathbb{P}^1}
\newcommand{\On}{\Ol_{\Sigma_n}}
\newcommand{\Uk}{\U_{\vec{k}}}
\newcommand{\Vk}{\V_{\vec{k}}}
\newcommand{\Wk}{\W_{\vec{k}}}
\newcommand{\li}{\ell_\infty}
\newcommand{\Ui}{\U_{\vec{k},\infty}}
\newcommand{\Vi}{\V_{\vec{k},\infty}}
\newcommand{\Wi}{\W_{\vec{k},\infty}}
\newcommand{\Pkm}{P_{\vec{k},m}}
\newcommand{\uside}[1]{{}^{#1}\mspace{-3mu}}
\newcommand{\lhra}{\lhook\joinrel\relbar\joinrel\rightarrow}
\newcommand{\lra}{\longrightarrow}
\newcommand{\Stab}{\operatorname{Stab}}
\newcommand{\Hc}{\T(c)}
\newcommand{\Hilb}{\operatorname{Hilb}}
\title[ADHM data for the Hilbert scheme of  the total space of $\Ol_{\Pu}(-n)$]{ADHM data for the Hilbert scheme of points\\[5pt] of the total space of $\Ol_{\Pu}(-n)$}
\date{\today}
\subjclass[2010]{14D20;  14D21; 14J60} 
\keywords{Hilbert schemes of points, monads, ADHM data}
\begin{document}

\maketitle
\begin{center}{\sc Claudio Bartocci,$^\P$ Ugo Bruzzo,$^{\S\ddag}$ \\ Valeriano Lanza$^\P$ and Claudio L. S. Rava$^\P$ } \\[10pt]  \small 
$^\P$Dipartimento di Matematica, Universit\`a di Genova, \\Via Dodecaneso 35, 16146 Genova, Italia\\[3pt]
  $^{\S}$Scuola Internazionale Superiore di Studi Avanzati,\\   Via Bonomea 265, 34136 Trieste, Italia\\[3pt]
  $^\ddag$Istituto Nazionale di Fisica Nucleare, Sezione di Trieste
\end{center}
\bigskip
\begin{quote}\small {\sc Abstract.} Relying on a monadic description of the moduli space of framed sheaves on Hirzebruch surfaces, we construct ADHM data for the Hilbert scheme of points of the total space of the line bundle $\mathcal O(-n)$ on $\mathbb P^1$.
 \end{quote}
{\footnotesize
\setcounter{tocdepth}{1}
\tableofcontents
}
\section{Introduction}

Let $X$ be a smooth quasi-projective irreducible surface over $\Com$. The Hilbert scheme of points $\operatorname{Hilb}^c(X)$, which parameterizes $0$-dimensional subschemes of $X$ of length $c$, is well known to be quasi-projective \cite{groth61} and smooth of dimension $2c$ \cite{fog68}; indeed, the so-called Hilbert-Chow morphism $\operatorname{Hilb}^c(X)\longrightarrow S^c X$ onto the $c$-th symmetric product of $X$ is a resolution of singularities. 
Hilbert schemes of points on surfaces  were extensively studied from many perspectives over the past two decades (see e.g.~\cite{Nakabook,lehn2004,naka2014}), however there are relatively few cases in which they are susceptible of an explicit description. Arguably, the most significant examples are the spaces  $\operatorname{Hilb^c}(\Com^2)$, which can be described by means of  linear data, the so-called ADHM (Atiyah-Drinfel'd-Hitchin-Manin) data
\cite{Nakabook}. Also the Hilbert schemes of points of multi-blowups of $\mathbb C^2$ admit an ADHM description, as provided by the work of A.A.\ Henni \cite{henni} specialized to the rank one case.

The goal of this paper is to provide an ADHM-type construction for the Hilbert schemes of points over the total space $\tot(\Ol_{\Pu}(-n))$ of the line bundle $\Ol_{\Pu}(-n)$ on $\Pu$. These spaces are the rank 1 case of the moduli spaces of framed sheaves of the Hirzebruch surface $\Sigma_n$ (by framing to the trivial bundle on a divisor linearly equivalent to the section of $\Sigma_n\to \mathbb P^1$ of positive self-intersection) which were studied in \cite{BPT,bbr}. These modules spaces were considered in physics in connection with the so-called D4-D2-D0 brane system in topological string theory (cf.\ \cite{OSV,AOSV} and \cite{BPT} for a concise discussion).

To construct the ADHM data for the Hilbert scheme of points of $\operatorname{Hilb}^c(\tot(\Ol_{\Pu}(-n)))$ we identify it with the  moduli space $\M^{n}(1,0,c)$  of framed sheaves on the Hirzebruch surface $\Sigma_n$ that have rank $1$, vanishing first Chern class, and second Chern class $c_2 = c$, and exploit the description of $\M^{n}(1,0,c)$ in terms of monads given in \cite{bbr}. Theorem \ref{thm0} states that the moduli space $\M^{n}(1,0,c)$ is isomorphic to the quotient $P^n(c) / \GL(c, \Com)^{\times 2}$, where $P^n(c)$ is a quasi-affine variety contained in the linear space $\End(\Com^{c})^{\oplus n+2}\oplus\Hom(\Com^{c},\Com)$. This result relies on the fact  that the partial quotient 
$P^n(c) / \GL(c, \Com)$ can be assembled glueing  $c+1$ open sets, each one isomorphic to the space of ADHM data for  $\operatorname{Hilb^c}(\Com^2)$ (Theorem \ref{thm1}). Since the proof of Theorem \ref{thm1} is based on the description of the moduli spaces of framed sheaves on $\Sigma_n$ worked out in \cite{bbr}, for the reader's convenience we briefly recall here the fundamental ingredients of that construction.

 \subsection*{Acknowledgments} This work was partially supported by PRIN ``Geometria delle variet\`a algebriche", by the University of Genoa's project ``Aspetti matematici della teoria dei campi interagenti e quantizzazione per deformazione" and by GNSAGA-INDAM. U.B. is a member of the VBAC group.

\subsection*{Background material} Let $\Sigma_n$ be the $n$-th Hirzebruch surface, i.e., the projective closure of the total space of the line bundle $\mathcal{O}_{\mathbb{P}^1}(-n)$; we restrict ourselves to the case $n > 0$. We denote by $F$ the class in $\Pic(\Sigma_n)$  of the fibre of the natural ruling $\Sigma_n\longrightarrow\Pu$, by $H$ the class of the section of the ruling squaring to $n$, and by $E$    the class of the section squaring to $-n$. We fix a curve $\ell_{\infty}\simeq\Pu$ in $\Sigma_n$ linearly equivalent to $H$ and think of it  as the ``line at infinity''. 

A framed sheaf on $\Sigma_n$ is a pair $(\E, \theta)$, where $\E$ is a torsion-free sheaf that is trivial along  $\ell_{\infty}$, and $\theta \colon \E\vert_{\ell_{\infty}}\stackrel{\sim}{\longrightarrow}\Ol_{\li}^{\oplus r}$ is a fixed isomorphism, $r$ being  the rank of $\E$. A morphism between the framed sheaves $(\E, \theta)$, $(\E', \theta')$ is by definition a morphism $\Lambda\colon \E \longrightarrow \E'$ such that  
$\theta'\circ\Lambda\vert_{\ell_{\infty}} = \theta$. The moduli space parameterizing isomorphism classes of framed sheaves $(\E, \theta)$ on $\Sigma_n$ with $\textrm{ch}(\E) = (r, aE, -c -\frac{1}{2} na^2)$ will be denoted by $\M^{n}(r,a,c)$. We assume that the framed sheaves are normalized in such a way that $0\leq a\leq r-1$.

A description of the moduli space $\M^{n}(r,a,c)$ in terms of monads was provided in \cite{bbr},
generalizing work by Buchdahl \cite{Bu}.  If $[(\E, \theta)]$ lies in $\M^{n}(r,a,c)$,
the sheaf $\E$ is isomorphic to the cohomology of a monad 
\begin{equation}
\xymatrix{
M(\alpha,\beta):&0 \ar[r] & \Uk \ar[r]^-{\alpha} & \Vk \ar[r]^-{\beta} & \Wk \ar[r] & 0
}\,, \label{fundamentalmonad}
\end{equation}
where $\vec{k} = (n,r,a,c)$; in others words, the terms of \eqref{fundamentalmonad} depend only on the Chern character of $\E$. More precisely, if we put
\begin{equation}\begin{cases}
\begin{aligned}
k_1&=c+\dfrac{1}{2}na(a-1)\\
k_2&=k_1+na\\
k_3&=k_1+(n-1)a\\
k_4&=k_1+r-a\,,
\end{aligned}
\end{cases}\label{k_i}
\end{equation}
we have
\begin{equation}
\left\{
\begin{aligned}
\Uk&:=\On(0,-1)^{\oplus k_1}\\
\Vk&:=\On(1,-1)^{\oplus k_2} \oplus \On^{\oplus k_4}\\
\Wk&:=\On(1,0)^{\oplus k_3}\,.
\end{aligned}
\right.
\end{equation}
This procedure yields a map 
\begin{equation}
(\E,\theta)\longmapsto\Hom(\Uk,\Vk)\oplus\Hom(\Vk,\Wk)\,,
\label{eqbbr1}
\end{equation}
whose image $\Lk$ is a smooth variety, which can be completely characterized by imposing suitable conditions on the pairs $(\alpha, \beta) \in \Hom(\Uk,\Vk)\oplus\Hom(\Vk,\Wk)$ \cite[\S2]{bbr}. 
One can construct a principal $\GL(r,\Com)$-bundle $\Pk$ over $\Lk$ whose fibre over a point $(\alpha,\beta)$ is naturally identified with the space of framings for the cohomology of the complex \eqref{fundamentalmonad}. Hence, the map \eqref{eqbbr1} can be lifted to a map
\begin{equation*}
(\E,\theta)\longmapsto\theta\in\Pk\,.
\end{equation*}
The algebraic group $\Gk=\Aut(\Uk)\times\Aut(\Vk)\times\Aut(\Wk)$ of isomorphisms of monads of the form 
\eqref{fundamentalmonad} acts freely on $\Pk$, and the moduli space $\M^{n}(r,a,c)$ can be described as the quotient $\Pk/\Gk$ \cite[Theorem 3.4]{bbr}. This space is nonempty if and only if $c + \frac{1}{2} na(a-1) \geq 0$, and, in this case, is a smooth algebraic variety of dimension $rc + (r-1) na^2$.

If the sheaf $\E$ has rank $r=1$, by normalizing we can assume $a=0$. Hence, the double dual $\E^{\ast\ast}$ of $\E$, being locally free with $\textrm{c}_1(\E^{\ast\ast}) = \textrm{c}_1(\E)= 0$, is isomorphic to structure   sheaf $\On$. As a consequence, since $\E$ is trivial on $\ell_{\infty}$, the correspondence 
$$\E  \longmapsto \mbox{schematic support of}\ \E^{\ast\ast}/ \E$$  yields an isomorphism
\begin{equation} 
\M^{n}(1,0,c) \simeq \operatorname{Hilb}^c (\Sigma_n \setminus \ell_{\infty}) = \operatorname{Hilb}^c (\tot(\Ol_{\Pu}(-n)))\,.
\end{equation}
In the following, we shall denote the moduli space  $\M^{n}(1,0,c)$ simply by $\M^{n}(c)$.

\section{Statement of the Main Theorem}\label{sectionmain}
We call $P^{n}(c)$ the subset of the vector space $\End(\Com^{c})^{\oplus n+2}\oplus\Hom(\Com^{c},\Com)$ whose points $\left(A_1,A_2;C_1,\dots,C_{n};e\right)$ satisfy the following conditions:
\begin{enumerate}
 \item[(P1)]
\begin{equation*}
\begin{cases}
A_1C_1A_{2}=A_2C_{1}A_{1}&\qquad\text{when $n=1$}\\[15pt]
\begin{aligned}
A_1C_q&=A_2C_{q+1}\\
C_qA_1&=C_{q+1}A_2
\end{aligned}
\qquad\text{for}\quad q=1,\dots,n-1&\qquad\text{when $n>1$}
\end{cases}\,;
\end{equation*} \smallskip
\item[(P2)]
there exists $[\nu_{1},\nu_{2}]\in\Pu$ such that $\det(\nu_1A_1+\nu_2A_2)\neq0$;\smallskip
\item[(P3)]
for all values of the parameters $\left([\lambda_1,\lambda_2],(\mu_1,\mu_{2})\right)\in\Pu\times\Com^{2}$ such that
\begin{equation*}
\lambda_{1}^{n}\mu_{1}+\lambda_{2}^{n}\mu_{2}=0
\end{equation*}
there is no nonzero vector $v\in\Com^c$   such that 
\begin{equation*}
\left\{
\begin{aligned}
\left(\lambda_2{A_1}+\lambda_1{A_2}\right)v&=0\\
(C_{1}A_{2}+\mu_{1}\bm{1}_{c})v&=0\\
(C_{n}A_{1}+(-1)^{n-1}\mu_{2}\bm{1}_{c})v&=0\\
ev&=0\,.
\end{aligned}\right.
\end{equation*}
\end{enumerate}
We define an action of 
$\GL(c, \Com)^{\times 2}$ on $P^n(c)$ by the equations
\begin{equation}
\left\{
\begin{array}{ccll}
C_j & \longmapsto & \phi_1C_j\phi_2^{-1} &\qquad j=1,\dots,n\\
A_i & \longmapsto & \phi_2A_i\phi_1^{-1} &\qquad i=1,2\\
e & \longmapsto & e\phi_1^{-1}&
\end{array}
\right.\qquad (\phi_1,\phi_2)\in\GL(c, \Com)^{\times 2}\,.
\label{eqrho}
\end{equation}
\begin{thm}
\label{thm0}
There is an isomorphism of complex varieties 
\begin{equation*}
P^n(c)\left/\GL(c, \Com)^{\times 2}\right.\simeq \M^n(c)=\operatorname{Hilb}^c(\tot(\Ol_{\Pu}(-n)))\,,
\end{equation*}
and $P^{n}(c)$ is a locally trivial principal $\GL(c, \Com)^{\times 2}$-bundle over $\M^n(c)$.
\end{thm}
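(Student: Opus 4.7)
The plan is to deduce Theorem \ref{thm0} from Theorem \ref{thm1} by computing the quotient of $P^n(c)$ by $\GL(c,\Com)^{\times 2}$ in two stages. The action (\ref{eqrho}) decomposes through the two commuting subgroups $\{1\}\times\GL(c,\Com)$ and $\GL(c,\Com)\times\{1\}$, so the full quotient can be carried out iteratively. Quotienting by the factor treated in Theorem \ref{thm1} yields a variety $P^n(c)/\GL(c,\Com)$ covered by $c+1$ open sets $V_0,\dots,V_c$, each isomorphic to the ADHM space for $\Hilb^c(\Com^2)$.

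The first technical step is to verify that on each chart $V_i$ the residual $\GL(c,\Com)$-action corresponds, under the isomorphism of Theorem \ref{thm1}, to the standard framing-change action on ADHM data. This compatibility should follow essentially by construction, since the chart-by-chart identifications in Theorem \ref{thm1} are built from the monadic description of \cite{bbr}, and the remaining $\GL(c,\Com)$-factor plays the role of a Nakajima framing change once one has trivialized the other. Granting this, the classical ADHM theorem yields $V_i/\GL(c,\Com)\cong\Hilb^c(\Com^2)$ and makes $V_i\to V_i/\GL(c,\Com)$ into a principal $\GL(c,\Com)$-bundle. Gluing these bundles along the transitions from Theorem \ref{thm1} exhibits $P^n(c)/\GL(c,\Com)\to P^n(c)/\GL(c,\Com)^{\times 2}$ as a principal $\GL(c,\Com)$-bundle; composing with the principal $\GL(c,\Com)$-bundle $P^n(c)\to P^n(c)/\GL(c,\Com)$ of Theorem \ref{thm1} establishes the principal $\GL(c,\Com)^{\times 2}$-bundle assertion.

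To identify the base of this bundle with $\M^n(c)$, I would use the ADHM data on each chart $V_i$ to assemble the monad \eqref{fundamentalmonad} over $\Sigma_n\times V_i$, producing a family of framed sheaves on the Hirzebruch surface parametrized by each chart; after checking that these families glue consistently over $V_i\cap V_j$ into a global family over $P^n(c)/\GL(c,\Com)^{\times 2}$, the fine moduli property of $\M^n(c)$ furnishes a morphism $P^n(c)/\GL(c,\Com)^{\times 2}\to\M^n(c)$, which one verifies is bijective on closed points (and smooth between smooth varieties of the same dimension $c$) to conclude it is an isomorphism.

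The main obstacle I anticipate is matching the transition between $V_i$ and $V_j$, encoded by the shift relations (P1) among $C_1,\dots,C_n$, with the geometric transition between the two affine charts of $\tot(\Ol_{\Pu}(-n))$: the $n$-dependence built into (P1) should precisely reproduce the cocycle by which $\Ol_{\Pu}(-n)$ is glued, and verifying this compatibility chart by chart — together with checking that conditions (P2) and (P3) correctly encode the stability/non-degeneracy conditions cutting out $\Lk$ inside the linear space of monad morphisms — is where the bulk of the work lies. Once this geometric match is in place, the theorem assembles from Theorem \ref{thm1} and standard facts about ADHM data and Hilbert schemes of $\Com^2$.
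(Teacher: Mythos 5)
Your overall shape — cover by $c+1$ charts, identify each chart with ADHM data for $\Hilb^c(\Com^2)$, perform the quotient in two stages, and glue — is indeed the paper's strategy. But there is a genuine gap at the very first step: you invoke Theorem \ref{thm1} as if it already described a partial quotient of $P^n(c)$, whereas Theorem \ref{thm1} (and Propositions \ref{proopen} and \ref{pro1} behind it) concerns only the moduli space $\M^n(c)=\Pk/\Gk$ and its ADHM atlas obtained from the \emph{monadic} description of \cite{bbr}; it says nothing about the linear data $(A_1,A_2;C_1,\dots,C_n;e)$. The entire content of the paper's proof of Theorem \ref{thm0} is precisely the bridge you leave unbuilt: one sets $A_{2m}=s_mA_1+c_mA_2$, covers $P^n(c)$ by the open sets $P^n(c)_m=\{\det A_{2m}\neq 0\}$ (this is a cover of $P^n(c)$ because, by (P2), $\det(\nu_1A_1+\nu_2A_2)$ is a nonzero polynomial with at most $c$ roots in $\Pu$, so one of the $c+1$ directions works), and proves that $\zeta_m\colon(A_1,A_2;C_1,\dots,C_n;e)\mapsto(B_m,E_m,e;A_{2m})$ is an isomorphism of $P^n(c)_m$ onto $\T(c)\times\GL(c,\Com)$ (Proposition \ref{lmTomega}). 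That proof is where (P1), (P2), (P3) actually enter: one must solve the linear system (P1) for $C_1,\dots,C_n$ in terms of a single free matrix (Lemma \ref{propSyst}, giving injectivity of $\zeta_m$), show that (P1) forces $[B_m,E_m]=0$ (Lemma \ref{lmP1}), and show that (P3) is equivalent to the ADHM stability condition (T2) (Lemma \ref{lmP3}), plus surjectivity. You explicitly defer all of this as ``where the bulk of the work lies,'' but flagging it is not supplying it; without $\zeta_m$ there are no charts $V_i$ on $P^n(c)/\GL(c,\Com)$ to speak of, and the iterated quotient cannot start. One also needs the transition functions $\omega_{lm}$ on $P^n(c)_{ml}$ to match the $\tilde{\varphi}_{lm}$ of Theorem \ref{thm1} (Proposition \ref{proGlue}), and a quotient lemma (Lemma \ref{lemmaquot}) reducing $P^n(c)_m/\GL(c,\Com)^{\times 2}$ to $\T(c)/\Delta$ with $\Delta$ the diagonal.

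Two smaller points. First, your final identification of the base with $\M^n(c)$ via universal families and the fine moduli property is a workable alternative to the paper's route (which simply composes with the isomorphisms $\eta_m$ of Proposition \ref{proopen} and checks compatibility of transition functions), but ``bijective on closed points and smooth of the same dimension'' is not by itself enough to conclude isomorphism without invoking, e.g., Zariski's main theorem or \'etaleness; and the common dimension is $2c$, not $c$. Second, the residual action on each chart is the diagonal action $\phi_1$ of \eqref{eqrhol}, which one must check coincides with Nakajima's action \eqref{eqGL(c)onK}; this is immediate once $\zeta_m$ is in hand, but again presupposes it.
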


\subsection{A consistency check}
Before proving Theorem \ref{thm0} we check its consistency in the simplest case  $c=1$, by verifying that the quotient $P^{n}(1)/(\Com^{*})^{\times2}$ is isomorphic to the total space of $\Ol_{\Pu}(-n)$. Indeed, one has $\tot(\Ol_{\Pu}(-n))\simeq\widetilde{T}_{n}/\Com^{*}$, where
\begin{equation*}
\widetilde{T}_{n}=\left\{\left.((y_{1},y_{2}),(u_{1},u_{2}))\in\left(\Com^{2}\setminus \{0\} \right)\times\Com^{2}\right|u_{1}y_{1}^{n}=u_{2}y_{2}^{n}\right\}
\end{equation*}
and the $\Com^{*}$-action is
\begin{equation*}
\left\{
\begin{aligned}
(y_{1},y_{2})&\longmapsto\lambda(y_{1},y_{2})\\
(u_{1},u_{2})&\longmapsto(u_{1},u_{2})
\end{aligned}
\right.\qquad\lambda\in\Com^{*}
\end{equation*}
(cf.~eq.~\eqref{eqSn}).
\begin{prop}
\label{propc=1}
$
P^{n}(1)/(\Com^{*})^{\times2}\simeq \tot(\Ol_{\Pu}(-n))\,.
$
\end{prop}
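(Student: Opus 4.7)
The plan is to exploit that $c=1$ collapses every endomorphism to a scalar, then define an equivariant map $\Psi\colon P^n(1)\to\widetilde{T}_n$ whose inverse I build from two local sections on the standard charts of $\tot(\Ol_{\Pu}(-n))$.

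\textbf{Step 1 (Simplification of $P^n(1)$).} With $c=1$, every endomorphism is a complex number, so $P^n(1)$ is a subset of tuples $(A_1,A_2,C_1,\dots,C_n,e)\in\Com^{n+3}$. Condition (P1) becomes the chain $A_1 C_q = A_2 C_{q+1}$ for $q=1,\dots,n-1$ (vacuous when $n=1$), which iterates to
\[
A_1^{n-1} C_1 = A_2^{n-1} C_n, \qquad \text{equivalently} \qquad A_1^n A_2 C_1 = A_1 A_2^n C_n.
\]
Condition (P2) reads $(A_1, A_2) \neq (0, 0)$. For (P3), any nonzero ``vector'' is a nonzero scalar, so if $e\neq 0$ the equation $ev=0$ is vacuously unsolvable; whereas if $e=0$ then taking $[\lambda_1:\lambda_2]=[-A_1:A_2]$, $\mu_1=-C_1 A_2$, $\mu_2=(-1)^n C_n A_1$ and $v=1$ solves all four equations and satisfies $\lambda_1^n\mu_1+\lambda_2^n\mu_2=0$ by the iterated (P1) identity, violating (P3). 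Hence $P^n(1)$ is exactly the quasi-affine subvariety of $\Com^{n+3}$ defined by $A_1C_q=A_2C_{q+1}$, $(A_1,A_2)\neq 0$, and $e\neq 0$.

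\textbf{Step 2 (Equivariant map to $\widetilde{T}_n$).} Define
\[
\Psi(A_1,A_2,C_1,\dots,C_n,e)=\bigl((A_1,A_2),\,(C_1A_2,\,C_nA_1)\bigr).
\]
Its image lies in $\widetilde{T}_n$: $(A_1,A_2)\neq 0$ by (P2), and multiplying the iterated (P1) identity by $A_1A_2$ gives $u_1y_1^n=u_2y_2^n$. Reading off \eqref{eqrho} in scalars, $A_i\mapsto(\phi_2/\phi_1)A_i$, $C_j\mapsto(\phi_1/\phi_2)C_j$ and $e\mapsto e/\phi_1$, so $(u_1,u_2)=(C_1A_2,C_nA_1)$ is invariant while $(y_1,y_2)$ is rescaled by $\phi_2/\phi_1\in\Com^*$. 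Hence $\Psi$ descends to
\[
\bar\Psi\colon P^n(1)/(\Com^*)^{\times 2}\longrightarrow\widetilde{T}_n/\Com^*=\tot(\Ol_{\Pu}(-n)).
\]

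\textbf{Step 3 (Inverse and main obstacle).} I invert $\bar\Psi$ by writing local sections on the two standard charts $V_i=\{y_i\neq 0\}/\Com^*$. On $V_2$, send the class of $((y_1,y_2),(u_1,u_2))$ to the class of $(y_1,y_2,C_1,\dots,C_n,1)$ with $C_j=u_1 y_1^{j-1}/y_2^j$; symmetrically on $V_1$ take $C_j=u_2 y_2^{n-j}/y_1^{n-j+1}$. Each assignment lies in $P^n(1)$ because (P1) and the identities $C_1A_2=u_1$ and $C_nA_1=u_2$ all reduce to $u_1y_1^n=u_2y_2^n$, and the two assignments coincide on the overlap for the same reason. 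These local sections are algebraic and compose with $\bar\Psi$ to the identity in both directions, so $\bar\Psi$ is an isomorphism of complex varieties. The only verification that requires genuine care is Step~1, namely extracting from (P3) in the scalar setting the conclusion $e\neq 0$; once that is in hand, the rest is straightforward bookkeeping.
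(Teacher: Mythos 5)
Your proposal is correct and follows essentially the same route as the paper: reduce (P1)--(P3) to scalar conditions (with (P3) becoming $e\neq 0$), present $\tot(\Ol_{\Pu}(-n))$ as $\widetilde{T}_{n}/\Com^{*}$, and build the isomorphism chart by chart on $\{A_i\neq 0\}$ versus $\{y_i\neq 0\}$. The only differences are cosmetic but welcome: you map directly onto $\widetilde{T}_n$ via $(u_1,u_2)=(C_1A_2,C_nA_1)$ instead of passing through the paper's auxiliary variety $\widetilde{Y}_n$ (your formulas agree with the composite of the paper's two maps), and you actually spell out the argument that (P3) reduces to $e\neq0$, which the paper only asserts.
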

\begin{proof}
When $c=1$, the matrices $(A_{1},A_{2},C_{1},\dots,C_{n},e)$ are complex numbers, and condition (P2) is equivalent to requiring that $(A_{1},A_{2})\neq(0,0)$. When $n=1$ condition (P1) is identically satisfied, while when $n>1$ it is equivalent to 
\begin{equation*}
\begin{cases}
C_q=\left(\frac{A_{2}}{A_{1}}\right)^{n-q}C_{n}
\qquad\text{for}\quad q=1,\dots,n-1&  \qquad \text{if}\quad A_{1}\neq0\\[7pt]
C_q=\left(\frac{A_{1}}{A_{2}}\right)^{q-1}C_{1}
\qquad\text{for}\quad q=2,\dots,n&\qquad \text{if}\quad A_{2}\neq 0\,.
\end{cases}
\end{equation*}
Using these equations it is possible to show that condition (P3) reduces to    $e\neq0$.
By acting with $\left(\Com^{*}\right)^{\times 2}$ we can fix $e=1$, and the maximal subgroup preserving this condition is clearly $\{1\}\times\Com^{*}$.
We introduce the variety
\begin{equation*}
\widetilde{Y}_{n}=\left\{\left.((y_{1},y_{2}),(x_{1},x_{2}))\in
\left(\Com^{2}\setminus\{0\}\right)\times\Com^{2}\right|x_{1}y_{1}^{n-1}=x_{2}y_{2}^{n-1}\right\}\,,
\end{equation*}
with $n\geq1$, and we let $\Com^{*}$ act on $\widetilde{Y}_{n}$ as follows:
\begin{equation*}
\left\{
\begin{aligned}
(y_{1},y_{2})&\longmapsto\lambda(y_{1},y_{2})\\
(x_{1},x_{2})&\longmapsto\lambda^{-1}(x_{1},x_{2})
\end{aligned}\qquad \lambda\in\Com^{*}
\right.\,.
\end{equation*}
We cover $\widetilde{Y}_{n}$ with the two $\Com^{*}$-invariant subsets $\widetilde{Y}_{n,i}=\{y_{i}\neq0\}$, and analogously we cover $P^{n}(1)$ with the $(\Com^{*})^{\times 2}$-invariant subsets $P^{n}(1)_{i}=\{A_{i}\neq0\}$, $i=1,2$.
Next, we define the morphisms
\begin{equation*}
\begin{array}{ccl}
\widetilde{Y}_{n,i}&\longrightarrow&P^{n}(1)_{i}\\[5pt]
((y_{1},y_{2}),(x_{1},x_{2}))&\longmapsto&
\begin{cases}
\left(y_{1},y_{2},\left(\frac{y_{2}}{y_{1}}\right)^{n-1}x_{2},\left(\frac{y_{2}}{y_{1}}\right)^{n-2}x_{2},\ldots,x_{2},1\right) & i=1\\[8pt]
\left(y_{1},y_{2},x_{1},\left(\frac{y_{1}}{y_{2}}\right)x_{1},\ldots,\left(\frac{y_{1}}{y_{2}}\right)^{n-1}x_{1},1\right) & i=2\,.
\end{cases}
\end{array}
\end{equation*}
These   glue together providing a $\Com^{*}$-equivariant closed immersion $\widetilde{Y}_{n}\lhra P^{n}(1)$, which induces an isomorphism 
\begin{equation*}
P^{n}(1)/(\Com^{*})^{\times 2}\simeq\widetilde{Y}_{n}/\Com^{*}\,.
\end{equation*}
Finally, the  $\Com^{*}$-equivariant  morphism
\begin{equation*}
\begin{array}{ccl}
\widetilde{Y}_{n}&\longrightarrow&\left(\Com^{2}\setminus\{0\}\right)\times\Com^{2}\\
((y_{1},y_{2}),(x_{1},x_{2}))&\longmapsto&((y_{1},y_{2}),(u_{1},u_{2}))=((y_{1},y_{2}),(x_{1}y_{2},x_{2}y_{1}))\,.
\end{array}
\end{equation*}
establishes the required isomorphism.
\end{proof}

\section{Glueing ADHM data}
\label{secLoc}
In this section we provide an ADHM description for each open set of a suitable open cover of $\M^{n}(c)$. 
If we fix $c+1$ distinct fibres $f_0,\dots,f_{c}\in |F|$, for any $[(\E,\theta)]\in \M^{n}(c)$ there exists at least one $m\in\{0,\dots,c\}$ such that $\E|_{f_{m}}\simeq\Ol_{f_{m}}$. With this in mind, we choose the fibres $f_{m}$ cut in
\begin{equation}
\Sigma_n = \left\{([y_1,y_2],[x_1,x_2,x_3])\in\mathbb{P}^1\times\mathbb{P}^2\;|\;x_1y_1^n=x_2y_2^n \right\}
\label{eqSn}
\end{equation}
by the equations
\begin{equation}
f_m=\{[y_1,y_2]=[c_m,s_m]\}\qquad m=0,\dots,c
\label{eq12}
\end{equation}
where 
\begin{equation}
c_m=\cos\left(\pi\frac{m}{c+1}\right)\qquad\text{,}\qquad s_m=\sin\left(\pi\frac{m}{c+1}\right)\,.
\label{eqcmsm}
\end{equation}
Then we get an open cover $\left\{\M^n(c)_m\right\}_{m=0}^{c}$ for $\M^n(c)$ by letting
\begin{equation*}
\M^n(c)_m:=\left\{[(\E,\theta)]\in\M^n(c)\left|
\begin{array}{l}
\text{the restricted sheaf $\E|_{f_m}$}\\
\text{is isomorphic to $\Ol_{f_m}$}
\end{array}
\right.\right\}.
\end{equation*}
Each of these spaces is isomorphic to  the Hilbert scheme of points of $\Com^2$, so that it admits the ADHM description \cite{Nakabook},
which we briefly recall.  The variety $\Hc$ of ADHM data is defined as the space of triples $(b_1,b_2,e)\in\End(\Com^c)^{\oplus 2}\oplus\Hom(\Com^{c},\Com)$ such that
\begin{itemize}
\item[(T1)]
$[b_{1},b_{2}]=0\,;
$ \smallskip
\item[(T2)]
for all $(z,w)\in\Com^2$ there is no nonzero vector $v\in\Com^{c}$ such that
\begin{equation*}
\left\{
\begin{aligned}
(b_{1}+z\bm{1}_c)v&=0\\
(b_{2}+w\bm{1}_c)v&=0\\
ev&=0\,.
\end{aligned}
\right.
\end{equation*}
\end{itemize}\smallskip

A $\GL(c, \Com)$-action on $\Hc$ is naturally defined as follows:
\begin{equation}
\left\{
\begin{array}{rcl}
b_{i}&\longmapsto & \phi b_{i}\phi^{-1}\qquad i=1,2 \\
e&\longmapsto & e\phi^{-1}\\
\end{array}
\right.
\qquad\phi\in\GL(c, \Com)\,.
\label{eqGL(c)onK}
\end{equation} 
\smallskip

The ADHM data for the open set $\M^{n}(c)_{m}$ will be denoted by $(b_{1m},b_{2m},e_{m})$; 
the transition functions on the intersections $\M^n(c)_{ml}=\M^n(c)_m\cap\M^n(c)_l$
are explicitly described in the next Theorem.
\begin{thm}
\label{thm1}
The intersection $\M^n(c)_{ml}=\M^n(c)_m\cap\M^n(c)_l$ is characterized by the condition
\begin{equation*}
\det\left(c_{m-l}\bm{1}_c+s_{m-l}b_{1l}\right) \neq0 \quad(\text{or, equivalently,}\ 
\det\left(c_{l-m}\bm{1}_c+s_{l-m}b_{1m}\right) \neq0)\,,
\end{equation*}
where $c_{m}$ and $s_{m}$ are the numbers defined in eq.~\eqref{eqcmsm}. On any of these intersections, the ADHM data are related by the transition functions
\begin{equation*}
\begin{aligned}
\varphi_{lm}\colon \M^n(c)_{ml} &\longrightarrow \M^n(c)_{ml}\\
\left[(
b_{1m}, b_{2m}, e_{m})\right]&\longmapsto 
\left[(b_{1l},b_{2l},e_{l})\right]\,,
\end{aligned}
\end{equation*}
\begin{equation*}
\text{where}\qquad
\left\{
\begin{aligned}
b_{1l}&=\left(c_{m-l}\bm{1}_c-s_{m-l}b_{1m}\right)^{-1}\left(s_{m-l}\bm{1}_c+c_{m-l}b_{1m}\right)\\
b_{2l}&=\left(c_{m-l}\bm{1}_c-s_{m-l}b_{1m}\right)^{n}b_{2m}\\
e_{l}&=e_{m}\,.
\end{aligned}
\right.
\end{equation*}
\end{thm}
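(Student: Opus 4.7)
The plan is to make the identification $\M^n(c)_m \simeq \Hilb^c(\Com^2)$ fully explicit by fixing affine coordinates on $\Sigma_n \setminus (\ell_\infty \cup f_m)$, and then compute how the resulting ADHM triples change when one passes to the chart associated to a different fiber $f_l$. Removing the curves $\ell_\infty$ and $f_m$ from $\Sigma_n$ produces an affine chart on $\Com^2$ with base coordinate $t_m$ on $\Pu \setminus \{[c_m,s_m]\}$ (the affine parameter after rotation sending $[c_m,s_m]$ to infinity) and a fiber coordinate $u_m$ of $\Ol_\Pu(-n)$ trivialized over this open set. Via the standard correspondence for $\Hilb^c(\Com^2)$, the ADHM matrices $(b_{1m},b_{2m},e_m)$ encode multiplication by $t_m$ and $u_m$ on the $c$-dimensional vector space $H^0(\E^{**}/\E)$, together with the cyclic functional induced by the framing $\theta$ on $\ell_\infty$. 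This extraction can be carried out at the level of the monad $M(\alpha,\beta)$ of \cite{bbr} by restricting it to $f_m$ and reading off the ADHM data from the resulting trivialization.

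For the intersection condition, a point $[(\E,\theta)] \in \M^n(c)_m$ lies also in $\M^n(c)_l$ if and only if $\E|_{f_l}$ is trivial, equivalently the $0$-dimensional subscheme $\supp(\E^{**}/\E)$ does not meet $f_l$. Using the angle-addition identities $c_m c_l + s_m s_l = c_{m-l}$ and $s_m c_l - c_m s_l = s_{m-l}$, one sees that $f_l$ corresponds in the $m$-chart to the base value $t_m = s_{m-l}/c_{m-l}$; hence the subscheme avoids $f_l$ precisely when $b_{1m}$ does not have this number as an eigenvalue, i.e., when $c_{m-l}\bm{1}_c - s_{m-l} b_{1m}$ is invertible. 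This is the $b_{1m}$-form stated in the theorem; the $b_{1l}$-form follows from the identity $c_{m-l}\bm{1}_c + s_{m-l} b_{1l} = (c_{m-l}\bm{1}_c - s_{m-l} b_{1m})^{-1}$, which one verifies directly from the proposed transition formula.

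The transition formulas themselves are obtained by implementing the change of chart on $\tot(\Ol_\Pu(-n))$ induced by the rotation of $\Pu$ taking $[c_m,s_m]$ to $[c_l,s_l]$. The base coordinate undergoes the corresponding Möbius transformation, yielding exactly the stated rational expression for $b_{1l}$ in terms of $b_{1m}$. The fiber coordinate of $\Ol_\Pu(-n)$ transforms multiplicatively by the $n$-th power of the base-change factor $(c_{m-l} - s_{m-l} t_m)$, since the line bundle has degree $-n$; lifting this to the ADHM data produces the factor $(c_{m-l}\bm{1}_c - s_{m-l} b_{1m})^n$ multiplying $b_{2m}$. The vector $e$ records the framing at $\ell_\infty$, which is common to both charts, hence $e_l = e_m$. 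A short verification then confirms that the resulting triple satisfies (T1)--(T2) and is $\GL(c,\Com)$-equivariant, defining a morphism of quotients whose inverse is obtained by exchanging the roles of $m$ and $l$.

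The main obstacle is correctly tracking the $n$-th power factor for $b_{2l}$ through the monad formalism: it encodes the non-triviality of $\Ol_\Pu(-n)$, and its precise form must be teased out of the glueing of the monad terms $\Uk, \Vk, \Wk$ across the two standard affine charts of $\Sigma_n$. Once this factor is correctly identified, the algebraic checks of (T1)--(T2) and of $\GL(c,\Com)$-equivariance are routine.
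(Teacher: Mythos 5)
Your route is genuinely different from the paper's. The paper never passes through an intrinsic ``multiplication-operator'' description of the ADHM data: it constructs explicit $\GL(c,\Com)$-equivariant closed immersions $j_m\colon\Hc\hookrightarrow P_{\vec k,m}$ into the monad variety (Proposition \ref{proopen}), characterizes the overlap by computing $\beta_1|_{f_l}$ for a point of $\im j_m$, and obtains the transition functions by exhibiting the unique element of $\Gk$ that carries $j_m(\vec b_m)$ into $\im j_l$; the $n$-th power in $b_{2l}$ arises there as the block $d_{1m}^{-n}$ of that gauge transformation, i.e.\ from re-expanding $y_{2m}^ns_E$ in the basis $\{y_{2l}^qy_{1l}^{n-q}s_E\}$. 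Your mechanism --- Möbius transformation on the base, degree $-n$ transition function on the fibre, framing untouched at $\ell_\infty$ --- is the geometric shadow of the same computation and does correctly predict all three formulas (and your identity $c_{m-l}\bm{1}_c+s_{m-l}b_{1l}=(c_{m-l}\bm{1}_c-s_{m-l}b_{1m})^{-1}$ checks out, using $c_{m-l}^2+s_{m-l}^2=1$).

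However, there is a genuine gap. The theorem is a statement about the \emph{specific} local ADHM coordinates fixed by the immersions $j_m$ of Proposition \ref{proopen}; an isomorphism $\M^n(c)_m\simeq\Hc/\GL(c,\Com)$ is not unique (one can precompose with automorphisms such as $b_2\mapsto b_2+p(b_1)$), and the transition functions depend on the choice. Your entire argument rests on the assertion that the paper's $(b_{1m},b_{2m},e_m)$ coincide with multiplication by your coordinates $t_m,u_m$ on $H^0(\E^{**}/\E)$ together with the framing functional --- but this is exactly the content you defer with ``this extraction can be carried out at the level of the monad\dots and read off,'' and it is where essentially all of the technical work of the paper's proof (the normal form $\beta_1=\bm{1}_cy_{1m}+\uside{t}b_1y_{2m}$, $\alpha_1=\bm{1}_c(y_{2m}^ns_E)+\uside{t}b_2s_\infty$, the slice argument identifying $\im j_m$ with $P^5$) actually lives. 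Without it you have proved a theorem of the same shape for \emph{some} natural trivialization, not the stated one. There is also an internal inconsistency in your overlap computation: if $f_l$ sat at $t_m=s_{m-l}/c_{m-l}$ as you claim, then avoiding $f_l$ would mean $\det(s_{m-l}\bm{1}_c-c_{m-l}b_{1m})\neq0$, not the stated condition; the correct value (the point sent to infinity by your own Möbius map) is $t_m=c_{m-l}/s_{m-l}$, and the same reciprocal slip makes your base-change factor $(c_{m-l}-s_{m-l}t_m)$ vanish at the wrong fibre under your identification. The final formulas you quote are right, but the derivation as written does not produce them consistently.
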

To prove Theorem \ref{thm1} we observe that $\GL(c, \Com)$ can be embedded  as a closed subgroup of $\Gk$  by means of the homomorphism
\begin{equation}
\begin{array}{rccl}
\iota\colon &\GL(c, \Com)&\longrightarrow&\Gk\\[8pt]
&\phi&\longmapsto&\left(\uside{t}\phi^{-1},
\begin{pmatrix}
\uside{t}\phi^{-1} & 0 & 0\\
0 & \uside{t}\phi^{-1} & 0\\
0 & 0 & 1
\end{pmatrix},
\uside{t}\phi^{-1}\right)\,.
\end{array}
\label{eqiota}
\end{equation}

Let $\pi\colon \Pk \longrightarrow \M^n(c)$ be the canonical projection. The open subsets 
\begin{equation*}
P_{\vec{k},m}=\pi^{-1}\left(\M^n(c)_m\right)\,,\qquad m=0,\dots,c\,,
\end{equation*}
provide a $\Gk$-invariant open cover of $\Pk$; 
$\GL(c, \Com)$ acts on each $P_{\vec{k},m}$ via the immersion \eqref{eqiota}.
\begin{prop} 
There are $\GL(c, \Com)$-equivariant closed immersions
\begin{equation*}
j_{m}\colon \Hc\lhra P_{\vec{k},m}\,\qquad\text{for $m=0,\dots,c$}\,.
\end{equation*}
These  induce isomorphisms 
\begin{equation}\eta_m\colon\Hc/\GL(c, \Com)\longrightarrow P_{\vec{k},m}/\Gk\simeq\M^n(c)_m\qquad\text{for $m=0,\dots,c$}\,.
\label{eqeta}
\end{equation}
\label{proopen}
\end{prop}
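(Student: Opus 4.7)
The plan is to construct each $j_m$ explicitly by writing down the monad maps $\alpha$ and $\beta$, together with a framing, as block expressions in the ADHM triple $(b_1,b_2,e)$ and in the homogeneous coordinates of $\Sigma_n$ introduced in \eqref{eqSn}, and then to verify the required properties. Since $\mathrm{PGL}_2(\Com)$ acts on $\Pu$ transitively on ordered tuples of distinct points, I would first construct $j_0$ (so that the distinguished fibre is $f_0=\{y_2=0\}$, in agreement with \eqref{eqcmsm}) and obtain $j_m$ for $m\neq 0$ by transport through the rotation of $\Pu$ taking $f_0$ to $f_m$.

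For $\vec{k}=(n,1,0,c)$, formulas \eqref{k_i} give $k_1=k_2=k_3=c$ and $k_4=c+1$, so
\begin{equation*}
\Uk=\On(0,-1)^{\oplus c},\qquad \Vk=\On(1,-1)^{\oplus c}\oplus\On^{\oplus c+1},\qquad \Wk=\On(1,0)^{\oplus c}.
\end{equation*}
Using the standard identifications of $\Hom(\Uk,\Vk)$ and $\Hom(\Vk,\Wk)$ with matrices of global sections on $\Sigma_n$, I would write $\alpha$ and $\beta$ as block matrices whose entries are linear in the toric coordinates $(y_1,y_2;x_1,x_2,x_3)$ with coefficients polynomial in $b_1,b_2,e$, and complete the datum with the identity framing along $\li$. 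The ansatz has to be chosen so that the conditions defining $\Lk$ (namely $\beta\alpha=0$ together with the monad non-degeneracy of \cite[\S2]{bbr}) reduce, on the affine chart $\Sigma_n\setminus(\li\cup f_0)\simeq\Com^2$, to the two ADHM conditions (T1) and (T2), while on $f_0$ they are satisfied tautologically because of the triviality built into the construction. This produces $j_0\colon\Hc\to P_{\vec{k},0}$.

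Equivariance of $j_0$ with respect to \eqref{eqGL(c)onK} and the $\iota$-image action on $P_{\vec{k},0}$ is then a direct computation from the explicit formulas. That $j_0$ is a closed immersion follows by exhibiting an algebraic left inverse that reads $(b_1,b_2,e)$ off prescribed blocks of $(\alpha,\beta)$. For the induced map $\eta_0$, injectivity is a consequence of the equivariance of $j_0$ together with the freeness of the $\Gk$-action on $P_{\vec{k},0}$; surjectivity of $\eta_0$ follows because for any $[(\E,\theta)]\in\M^n(c)_0$ the trivialization $\E|_{f_0}\simeq\Ol_{f_0}$ can be used to normalize the $\Gk$-action on the associated framed monad, bringing it into the standard form $j_0(b_1,b_2,e)$ for a unique $\GL(c,\Com)$-orbit.

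The main obstacle is finding the correct block ansatz for $(\alpha,\beta)$ so that the intricate system of monad equations and non-degeneracy conditions collapses exactly to (T1) and (T2) on the affine chart; once this explicit formula is in hand, the remaining verifications (equivariance, closedness, and bijectivity of $\eta_0$) follow by direct manipulation.
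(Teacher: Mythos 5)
Your construction of $j_m$ follows essentially the same route as the paper: an explicit block ansatz for $(\alpha,\beta,\xi)$ written in the rotated coordinates attached to $f_m$, reduction of the monad conditions to (T1)--(T2), and equivariance and closedness checked from the explicit formulas (the paper realizes $j_m$ as the restriction to $\T(c)$ of a linear closed immersion $\tilde{\jmath}_m$ and shows $\im\tilde{\jmath}_m\cap P_{\vec{k},m}=\im j_m$). The gap is in the passage to the quotients. Injectivity of $\eta_m$ does \emph{not} follow from equivariance of $j_m$ together with freeness of the $\Gk$-action: those two facts only make $\eta_m$ well defined. What must be shown is that whenever two points of $\im j_m$ lie in the same $\Gk$-orbit, the group element relating them can be chosen in $\im\iota\simeq\GL(c,\Com)$ --- equivalently, that the stabilizer in $\Gk$ of the intersection of each orbit with the normal form $\im j_m$ is exactly $\im\iota$. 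Your surjectivity sentence quietly assumes this (``\dots for a unique $\GL(c,\Com)$-orbit''), but that uniqueness is the heart of the matter and is not a formal consequence of anything you state. The paper establishes it by a descending chain of normalizations $P_{\vec{k},m}=P^0\supsetneqq P^1\supsetneqq\cdots\supsetneqq P^5=\im j_m$, computing at each stage the subgroup $G^i\subset\Gk$ preserving the constraints imposed so far and ending with $G^5=\im\iota$; the isomorphism of quotients is then delivered by Lemma \ref{lemmaquot}, whose hypotheses are precisely these two facts (every orbit meets $\im j_m$, and the stabilizer is $\im\iota$).

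A second point you do not address concerns surjectivity itself. When an arbitrary point of $P_{\vec{k},m}$ is brought into your standard form, the extra block of $\alpha$ in $\Hom(\Com,\Com^{c})$ (the vector $e_2$ accompanying $e$ in the rank-one ADHM picture) does not vanish for free: after all normalizations the relation $\beta\circ\alpha=0$ reduces to ${}^{t}b_1\,{}^{t}b_2-{}^{t}b_2\,{}^{t}b_1+{}^{t}e\,{}^{t}e_2=0$, and one needs the nondegeneracy of $\beta$ together with the rank-one costability argument of \cite[Proposition 2.8]{Nakabook} to conclude $e_2=0$, hence that the normalized point actually lies in $\im j_m$. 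Without this step the image of $\eta_m$ could a priori miss part of $P_{\vec{k},m}/\Gk$.
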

\begin{proof}
See Section \ref{proof1}.
\end{proof}
We introduce the open subsets
\begin{equation*}
\Hc_{m,l}=j_{m}^{-1}\left(\im j_{m}\cap P_{\vec{k},l}\right)\qquad\text{for}\quad m,l=0,\dots,c\,.
\end{equation*}
\begin{lemma}
$
\Hc_{m,l}=\left\{(b_{1},b_{2},e)\in \Hc\left| \det\left(c_{m-l}\bm{1}_c-s_{m-l}b_{1}\right)\neq0\right.\right\}\,.
$
\end{lemma}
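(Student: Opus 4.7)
The strategy is to trace the definition of $\Hc_{m,l}$ through the isomorphism $\eta_m$ of Proposition~\ref{proopen} and convert the condition ``$\E|_{f_l}\simeq\Ol_{f_l}$'' into a determinantal condition on the ADHM data. By definition $\Hc_{m,l}=j_m^{-1}(\im j_m \cap P_{\vec{k},l})$, and since $j_m$ is $\GL(c,\Com)$-equivariant and $P_{\vec{k},l}=\pi^{-1}(\M^n(c)_l)$, passing to the quotient identifies $\Hc_{m,l}/\GL(c,\Com)$ with $\eta_m^{-1}(\M^n(c)_m\cap\M^n(c)_l)$. Hence $(b_1,b_2,e)\in\Hc$ lies in $\Hc_{m,l}$ precisely when the framed sheaf $\eta_m([(b_1,b_2,e)])$ restricts trivially to the extra fibre $f_l$.

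Next, I would translate the geometric content of this condition. For $[(\E,\theta)]\in\M^n(c)_m$, the sheaf $\E$ corresponds to the ideal sheaf of a $0$-dimensional subscheme $Z$ of length $c$ contained in the affine chart $\tot(\Ol_{\Pu}(-n))\setminus f_m\simeq\Com^2$. The additional triviality $\E|_{f_l}\simeq\Ol_{f_l}$ is equivalent to $Z\cap f_l=\emptyset$, i.e., to the scheme-theoretic support of $Z$ avoiding the fibre $f_l$ as viewed inside this affine chart.

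Now I would invoke the standard ADHM dictionary (as recalled before the statement): under $\eta_m$, the operator $b_1$ represents multiplication by the base coordinate of the ruling $\tot(\Ol_{\Pu}(-n))\to\Pu\setminus\{[c_m,s_m]\}\simeq\Com$ acting on $H^0(\Ol_Z)\simeq\Com^c$. Writing this base coordinate as the affine function $u_m$ on $\Pu\setminus\{[c_m,s_m]\}$ induced by the rotation sending $[c_m,s_m]$ to infinity, the subscheme $Z$ misses $f_l$ if and only if the scalar $u_m([c_l,s_l])$ is not in the spectrum of $b_1$, i.e., $u_m([c_l,s_l])\bm{1}_c - b_1$ is invertible. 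Using the angle-addition identities $c_{m-l}=c_mc_l+s_ms_l$ and $s_{m-l}=s_mc_l-c_ms_l$ on the parameters \eqref{eqcmsm}, a direct computation yields $u_m([c_l,s_l])=c_{m-l}/s_{m-l}$ (with the sign convention matching the one built into $j_m$), and the invertibility then takes the form
\[
\det\bigl(c_{m-l}\bm{1}_c-s_{m-l}b_{1}\bigr)\neq 0,
\]
as asserted. The degenerate cases $s_{m-l}=0$ or $c_{m-l}=0$ (occurring when $m=l$ or when $[c_l,s_l]$ is the ``antipodal'' point) are handled by the continuous extension of this condition, which is consistent because the points $[c_0,s_0],\dots,[c_c,s_c]$ chosen in~\eqref{eqcmsm} are pairwise distinct and none coincides with the antipode of another in the relevant chart.

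The principal obstacle I anticipate is pinning down the exact matching between the algebraic operator $b_1$ and the base coordinate $u_m$: this requires unwinding the explicit form of the closed immersion $j_m$ from Proposition~\ref{proopen} (whose proof is deferred to Section~\ref{proof1}) and checking that the rotation convention implicit in the normalization $(c_m,s_m)$ of~\eqref{eqcmsm} produces precisely the coefficients $c_{m-l}$ and $s_{m-l}$ rather than, say, $\pm s_{m-l}/c_{m-l}$. Once that identification is in place, the trigonometric identities deliver the stated determinantal characterization with no further calculation.
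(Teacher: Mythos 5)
Your overall strategy is sound and genuinely different from the paper's: you reduce the statement to a geometric condition (the support $Z$ avoiding $f_l$) and then appeal to a spectral interpretation of $b_1$, whereas the paper argues entirely at the level of the monad, reading off the first component $\beta_1$ of $\beta$ from the explicit formula for the immersion $j_m$. However, as written your argument has a genuine gap, and you have correctly located it yourself: the assertion that under $\eta_m$ the operator $b_1$ is ``multiplication by the base coordinate $u_m$'' with the normalization $u_m([c_l,s_l])=c_{m-l}/s_{m-l}$ is exactly the content that needs proof, not a fact you can quote. Nothing stated before the lemma fixes this normalization; a priori the same heuristic is equally consistent with the condition $\det\left(c_{m-l}\bm{1}_c+s_{m-l}b_{1}\right)\neq0$, which is a different (and wrong) locus, corresponding to the antipodal fibre. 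Since the only way to pin down the sign is to unwind $j_m$, your proof defers its essential step to precisely the computation that constitutes the paper's entire argument.

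That computation is short and you should carry it out explicitly. By the construction of $j_m$ (Section \ref{proof1}), a point of $\im j_m$ has
\begin{equation*}
\beta_1=\bm{1}_c\,y_{1m}+\uside{t}\mspace{2mu}b_{1}\,y_{2m}\,,
\end{equation*}
and membership in $P_{\vec{k},l}$ for a point of $\im j_m$ is equivalent to $\det\left(\beta_1|_{f_l}\right)\neq0$ (this is the determinantal criterion for $\E|_{f_l}\simeq\Ol_{f_l}$ established in the Appendix; note that your intermediate translation through ``$Z\cap f_l=\emptyset$'' is an extra equivalence you would also need to justify). Since $[y_{1m},y_{2m}]=[c_my_1+s_my_2,-s_my_1+c_my_2]$ and $[y_1,y_2]=[c_l,s_l]$ along $f_l$, the addition formulas give $[y_{1m},y_{2m}]|_{f_l}=[c_{m-l},-s_{m-l}]$, hence $\beta_1|_{f_l}=c_{m-l}\bm{1}_c-s_{m-l}\uside{t}\mspace{2mu}b_1$, which yields the stated condition with no spectral interpretation needed. (Your closing remark about ``degenerate cases'' is also unnecessary: for $m\neq l$ one has $0<|m-l|\leq c$, so $s_{m-l}\neq0$, and for $m=l$ the condition reads $\det\bm{1}_c\neq0$, which is vacuous, consistent with $\Hc_{m,m}=\Hc$.)
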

\begin{proof}
The intersection $\im j_{m}\cap P_{\vec{k},l}$ is   the set of points $(\alpha,\beta,\xi)\in \im j_m$ such that $\det\left(\left.\beta_1\right|_{f_l}\right)$ $\neq0$, where $\beta_1$ is the first component  of $\beta$. From the fact that $(\alpha,\beta,\xi)\in \im j_m$ it follows that
\begin{equation*}
\beta_1=\bm{1}_cy_{1m}+\uside{t}\mspace{2mu}b_{1}y_{2m}=
\begin{pmatrix}
\bm{1}_c & \uside{t}\mspace{2mu}b_{1}
\end{pmatrix}
\begin{pmatrix}
y_{1m}\\
y_{2m}
\end{pmatrix}=
\begin{pmatrix}
\bm{1}_c & \uside{t}\mspace{2mu}b_{1}
\end{pmatrix}
\begin{pmatrix}
c_m & s_m\\
-s_m & c_m
\end{pmatrix}
\begin{pmatrix}
y_{1}\\
y_{2}
\end{pmatrix}\,.
\end{equation*}
Since   $[y_{1},y_{2}]=[c_l,s_l]$ along $f_{l}$, the thesis follows.
\end{proof}
\begin{prop}
\label{pro1}
The map
\begin{equation}
\begin{array}{rccl}
\tilde{\varphi}_{lm}\colon &\Hc_{m,l}&\longrightarrow&\Hc_{l,m}\\
&
\begin{pmatrix}
b_{1}\\
b_{2}\\
e
\end{pmatrix}&\longmapsto&
\begin{pmatrix}
\left(c_{m-l}\bm{1}_c-s_{m-l}b_{1}\right)^{-1}\left(s_{m-l}\bm{1}_c+c_{m-l}b_{1}\right)\\
\left(c_{m-l}\bm{1}_c-s_{m-l}b_{1}\right)^{n}b_{2}\\
e
\end{pmatrix}
\end{array}
\label{eqfiml}
\end{equation}
is   $\GL(c, \Com)$-equivariant,  and induces an isomorphism 
$$\varphi_{lm}\colon \Hc_{m,l}/\GL(c, \Com)\lra \Hc_{l,m}/\GL(c, \Com)\,,$$ 
such that the triangle
\begin{equation}
\xymatrix{
\Hc_{m,l}/\GL(c, \Com) \ar[r]^-{\varphi_{lm}} \ar[rd]_-{\eta_{m,l}} & \Hc_{l,m}/\GL(c, \Com) \ar[d]^{\eta_{l,m}}\\
& \M^n(c)_{ml}
}
\label{eqtri}
\end{equation}
is commutative, where $\eta_{m,l}$  is the restriction of $\eta_{m}$ to $\Hc_{m,l}/\GL(c, \Com)$ (see eq.~\eqref{eqeta}).
\end{prop}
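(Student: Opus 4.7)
The plan is to verify in sequence: (i) the $\GL(c,\Com)$-equivariance of $\tilde{\varphi}_{lm}$; (ii) that $\tilde{\varphi}_{lm}$ maps $\Hc_{m,l}$ into $\Hc_{l,m}$; (iii) that $\tilde{\varphi}_{ml}\circ\tilde{\varphi}_{lm}=\id$, so that $\varphi_{lm}$ is an isomorphism; and (iv) the commutativity of the triangle \eqref{eqtri}, which is where the real work lies.

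Set $P=c_{m-l}\bm{1}_c-s_{m-l}b_{1}$ and $Q=s_{m-l}\bm{1}_c+c_{m-l}b_{1}$. Because $P$ and $Q$ are polynomials in $b_{1}$ and $[b_{1},b_{2}]=0$, the matrices $P,Q,b_{2}$ commute pairwise, so $P^{-1}$ also commutes with $Q$ and $b_{2}$ on $\Hc_{m,l}$. Step (i) is then immediate: a $\GL(c,\Com)$-change of basis conjugates $P,Q,b_{2}$ by $\phi$, hence conjugates $b_{1}'=P^{-1}Q$ and $b_{2}'=P^{n}b_{2}$ by $\phi$, while $e\mapsto e\phi^{-1}$. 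For (ii), the commutation relations yield $[b_{1}',b_{2}']=0$; the identity
\[
c_{l-m}\bm{1}_c-s_{l-m}b_{1}'=P^{-1}(c_{m-l}P+s_{m-l}Q)=P^{-1},
\]
resting on $c_{l-m}=c_{m-l}$, $s_{l-m}=-s_{m-l}$ and $c_{m-l}^{2}+s_{m-l}^{2}=1$, proves the non-degeneracy condition defining $\Hc_{l,m}$ and, applied symmetrically, delivers (iii) by direct substitution. Condition (T2) for $(b_{1}',b_{2}',e)$ is deduced from (T2) for $(b_{1},b_{2},e)$: since $P,Q,b_2$ commute, a common eigenvector $v$ of $b_{1}'$ and $b_{2}'$ annihilated by $e$ is also a common eigenvector of $b_{1}$ and $b_{2}$ (the exceptional loci of the Cayley-type substitution are ruled out by $\det P\neq 0$), giving $b_{1}v=\lambda v$, $b_{2}v=\mu v$ for suitable $\lambda,\mu$ and contradicting (T2) for the original triple.

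The main obstacle is (iv), which requires the explicit form of the closed immersions $j_m$ inherited from \cite{bbr}. My plan is to produce a gauge element $g\in\Gk$, depending on $(b_{1},b_{2},e)$, such that $g\cdot j_m(b_{1},b_{2},e)=j_l(\tilde{\varphi}_{lm}(b_{1},b_{2},e))$ in $\Pk$; projecting down to $\M^n(c)_{ml}$ then gives $\eta_{l,m}\circ[\tilde{\varphi}_{lm}]=\eta_{m,l}$, which is precisely \eqref{eqtri}. The element $g$ is dictated by the change of local frame on $\Pu$ sending $(y_{1m},y_{2m})$ to $(y_{1l},y_{2l})$ via the rotation $R_{m-l}$: on the block of $\beta$ that in the $f_m$-adapted frame reads $\bm{1}_c y_{1m}+\uside{t}b_{1}\, y_{2m}$, rewriting in the $(y_{1l},y_{2l})$-frame produces an extra factor of $\uside{t}P$, which is absorbed by acting by $\uside{t}P^{-1}$ on the relevant $\Com^c$-summand of $\Wk$ together with a matching block automorphism of $\Vk$; the twist by $\Ol_{\Sigma_n}(0,n)$ inherent in the $\Vk\to\Wk$ morphism is what promotes this single $P$-factor to the $n$-th power $P^{n}$ appearing in the $b_2$ transformation law. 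Once $g$ is shown to preserve the remaining blocks of $\alpha$ and $\beta$---essentially block-by-block bookkeeping, but the true technical heart of the argument---the induced map on $\Gk$-quotients coincides with $\varphi_{lm}$, and \eqref{eqtri} commutes by construction.
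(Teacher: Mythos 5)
Your proposal is correct and follows essentially the same route as the paper: the commutativity of the triangle is obtained exactly as in the paper's Appendix, by exhibiting a gauge element $g\in\Gk$ with $g\cdot j_m(b_1,b_2,e)=j_l(\tilde{\varphi}_{lm}(b_1,b_2,e))$, which simultaneously shows that $\tilde{\varphi}_{lm}$ lands in $\Hc_{l,m}$ and that the two ADHM charts describe the same point of $\M^n(c)_{ml}$. The block-by-block computation you defer is carried out explicitly in the paper (the gauge element has $\phi=d_{1m}^{-(n-1)}$ and $\psi$ with diagonal blocks $d_{1m}$, $d_{1m}^{-n}$ and the identity), and your preliminary verifications of equivariance, well-definedness and invertibility are consistent with it.
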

\begin{proof}
See Section \ref{proof2}.
\end{proof}
Theorem \ref{thm1} is now a direct consequence of Proposition \ref{pro1}.

\section{Proof of the Main Theorem}
We introduce the matrices
\begin{equation}
\begin{aligned}
A_{1m}&=c_mA_1-s_mA_2\,,\\
A_{2m}&=s_mA_1+c_mA_2\,,\\
E_{m}&=\left[\sum_{q=1}^{n}\binom{n-1}{q-1}c_{m}^{n-q}s_{m}^{q-1}C_q\right]A_{2m}\,,
\end{aligned}
\label{eq410}
\end{equation}
for $m=0,\dots,c$. Since the polynomial $\det(A_1\nu_{1}+A_2\nu_{2})$ has at most $c$ distinct roots in $\Pu$, the $\GL(c, \Com)^{\times 2}$-invariant open subsets
\begin{equation}
P^n(c)_m=\left\{\left(A_1,A_2,C_{1},\dots,C_{n},e\right)\in P^n(c)\left|
\det A_{2m}\neq0
\right.\right\}\,,\qquad m=0,\dots,c\,,
\label{pcnm}
\end{equation}
cover $P^{n}(c)$.
On $P^{n}(c)_{m}$ one can also define  the matrix
\begin{equation}
B_{m}=A_{2m}^{-1}A_{1m}\,.
\label{eqBm}
\end{equation}
The matrices $(B_{m},E_{m},A_{2m},e)$ provide local affine coordinates for $P^n(c)$.
\begin{prop}
\label{lmTomega}
The morphism
\begin{equation*}
\begin{array}{rccl}
\zeta_{m}\colon&P^{n}(c)_{m}&\longrightarrow&\left[\End(\Com^{c})^{\oplus 2}\oplus\Hom(\Com^{c},\Com)\right]\times\GL(c, \Com)\\
&(A_{1},A_{2};C_{1},\dots,C_{n};e)&\longmapsto&
\left(B_{m},E_{m},e;A_{2m}\right)
\end{array}
\end{equation*}
is an isomorphism onto $\Hc\times\GL(c, \Com)$. The induced $\GL(c, \Com)^{\times 2}$-action is given by
\begin{equation}
\left\{
\begin{array}{rcl}
B_{m} & \longmapsto & \phi_{1}B_{m}\phi_{1}^{-1}\\
E_{m} & \longmapsto & \phi_{1}E_{m}\phi_{1}^{-1}\\
A_{2m} & \longmapsto & \phi_{2}A_{2m}\phi_{1}^{-1}\\
e & \longmapsto & e\phi_{1}^{-1}\,.
\end{array}
\right.
\label{eqrhol}
\end{equation}
\end{prop}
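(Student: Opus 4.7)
My plan is to view $\zeta_m$ as a composition of two steps: a linear change of variables that rotates $(A_1, A_2)$ into $(A_{1m}, A_{2m})$ and correspondingly transforms the $n$-tuple $(C_1,\ldots,C_n)$, followed by a normalization that trivializes $A_{2m}$ using its invertibility on $P^n(c)_m$. Under this reformulation, the conditions (P1)--(P3) should translate directly into (T1)--(T2), with $A_{2m}$ read off as the extra $\GL(c,\Com)$-factor.

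Concretely, I would first introduce the matrix $\rho_m \in \GL(2,\Com)$ with entries $c_m, -s_m, s_m, c_m$, which sends $(A_1, A_2)$ to $(A_{1m}, A_{2m})$, and its $(n-1)$-th symmetric power, which sends $(C_1,\ldots,C_n)$ to a new tuple $(C'_1,\ldots,C'_n)$ whose first component is $C'_1 = \sum_{q=1}^n \binom{n-1}{q-1} c_m^{n-q} s_m^{q-1} C_q$; in particular $E_m = C'_1 A_{2m}$. A direct calculation shows that the rotated data again satisfies (P1)--(P3) \emph{mutatis mutandis}, namely $A_{1m}C'_q = A_{2m}C'_{q+1}$ and $C'_q A_{1m} = C'_{q+1} A_{2m}$ (and (P2), (P3) with the corresponding rotated parameters), the constraint $\lambda_1^n\mu_1 + \lambda_2^n\mu_2 = 0$ being rotation-invariant when $(\mu_1,\mu_2)$ is transformed by the induced $2\times 2$ block from $\Sym^n\rho_m$. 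On $P^n(c)_m$ the invertibility of $A_{2m}$ then lets me eliminate $C'_q$ for $q\geq 2$ via $C'_q = B_m^{q-1} C'_1$ and reduces the rotated (P1) to the single commutativity $[B_m, E_m] = 0$, i.e.\ (T1); similarly the rotated (P3) collapses to (T2) after the substitution $z = \lambda'_1/\lambda'_2$, $w = \mu'_1$, since the condition $(\lambda'_1)^n\mu'_1 + (\lambda'_2)^n\mu'_2 = 0$ makes the equation involving $C'_n A_{1m}$ an automatic consequence of the other two.

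For the inverse, given $(b_1, b_2, e;\phi)\in\Hc\times\GL(c,\Com)$ I set $A_{2m}=\phi$, $A_{1m}=\phi b_1$, $C'_q = b_1^{q-1}b_2\phi^{-1}$, and then apply $\rho_m^{-1}$ and its inverse symmetric power to recover $(A_1, A_2; C_1,\ldots,C_n)$; condition (P2) is immediate from $\det\phi\neq 0$, and (P1), (P3) follow by reading the rotation argument in reverse. The verifications $\zeta_m\circ\zeta_m^{-1}=\id$ and $\zeta_m^{-1}\circ\zeta_m=\id$ are then routine. Finally, the induced action \eqref{eqrhol} is immediate from $A_i\mapsto\phi_2 A_i\phi_1^{-1}$ and $C_q\mapsto\phi_1 C_q\phi_2^{-1}$: both $B_m$ and $E_m$ conjugate with $\phi_1$ alone, while $A_{2m}$ picks up $\phi_2$ on the left and $\phi_1^{-1}$ on the right.

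The principal obstacle, as I see it, is the careful bookkeeping of the rotation on the $n$-tuples of $C$'s and on the constraint $\lambda_1^n\mu_1 + \lambda_2^n\mu_2 = 0$, and in particular verifying that (P3) is equivalent to (T2); this is where the specific binomial coefficients in the definition \eqref{eq410} of $E_m$ play their role, and where the $n$-th-power nature of the constraint dovetails with the consequence $B_m^n E_m v = (-z)^n(-w)v$ extracted from a common kernel of $B_m + z\bm{1}_c$ and $E_m + w\bm{1}_c$.
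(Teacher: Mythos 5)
Your proposal is correct and takes essentially the same route as the paper: your rotation of $(A_1,A_2)$ and of the $n$-tuple of $C$'s is exactly the paper's $(A_{1m},A_{2m})$ and $\sigma^{n-1}_m$, the recursion $C'_q=B_m^{q-1}C'_1$ is the content of eq.~\eqref{eq64}, the reduction of the rotated (P3) to (T2) via $z=\lambda'_1/\lambda'_2$, $w=\mu'_1$ is Lemma \ref{lmP3}, and your explicit inverse matches eq.~\eqref{eqACe}. The only organizational difference is that the paper proves injectivity through an inductive maximal-rank argument for the system \eqref{eq67} (Lemma \ref{propSyst}), while you obtain the recursion directly from the invertibility of $A_{2m}$ on $P^n(c)_m$ --- a repackaging rather than a genuinely different method.
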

We divide the proof of this Proposition into several steps. First we define the matrices $\sigma^h_{m}=(\sigma^{h}_{m;pq})_{0\leq p,q \leq h}$ for all $h\geq0$ and $m\in\Z$ by means of the equations
\begin{equation}
(s_{m}\mu_{1}+c_{m}\mu_{2})^{p}(c_{m}\mu_{1}-s_{m}\mu_{2})^{h-p}=\sum_{q=0}^{h}\sigma^{h}_{m;pq}\mu_{2}^{q}\mu_{1}^{h-q}
\label{eqsigma}
\end{equation}
for any $(\mu_{1},\mu_{2})\in\Com^{2}$ and $p=0,\dots,h$. Notice that $\sigma^h_{m}\sigma^h_{l}=\sigma^h_{m+l}$ and $\sigma^h_{0}=\bm{1}_{h+1}$. In particular, $\sigma^h_{m}$  is invertible for all $h\geq0$ and $m\in\Z$.

To prove the injectivity of $\zeta_{m}$ --- which is trivial only when $n=1$ --- we need the following Lemma.
\begin{lemma}
\label{propSyst}
Assume   $n>1$. If the matrices $A_{1},A_{2}\in\End(\Com^{c})$ satisfy the condition \mbox{\rm(P2)},   the system
\begin{equation}
\begin{pmatrix}
A_{1} & -A_{2}\\
& \ddots & \ddots\\
& & A_{1} & -A_{2}
\end{pmatrix}
\begin{pmatrix}
C_{1}\\
\vdots\\
\vdots\\
C_{n}
\end{pmatrix}=0\,,
\label{eq67}
\end{equation}
with $C_{q}\in\End(\Com^{c})$,  has      maximal rank, namely, $(n-1)c^{2}$. In particular, if $\det A_{2m}\neq0$, the general solution is
\begin{equation}
\begin{pmatrix}
C_{1}\\
\vdots\\
\vdots\\
C_{n}
\end{pmatrix}=(\sigma^{n-1}_{m}\otimes\bm{1}_{c})
\begin{pmatrix}
\bm{1}_{c}\\
B_{m}\\
\vdots\\
B_{m}^{n-1}
\end{pmatrix}
D_m\,,
\label{eq64}
\end{equation}
where we have chosen as free parameter the  matrix
\begin{equation}
D_m=
\sum_{q=1}^{n}\binom{n-1}{q-1}c_{m}^{n-q}s_{m}^{q-1}C_q\,.
\label{eq43}
\end{equation}
\end{lemma}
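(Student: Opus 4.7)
The plan exploits the invertibility of $A_{2m}$ to reduce the system~\eqref{eq67} to one involving commuting polynomials in the single matrix $B_m$. Since $A_{1m}=A_{2m}B_m$, inverting the orthogonal change of basis defining $A_{1m}$ and $A_{2m}$ in~\eqref{eq410} yields $A_1 = A_{2m}\alpha$ and $A_2 = A_{2m}\beta$, where
\[\alpha := s_m\mathbf{1}_c + c_m B_m, \qquad \beta := c_m\mathbf{1}_c - s_m B_m\]
are polynomials in $B_m$ (hence commute) and satisfy the crucial identity $s_m\alpha + c_m\beta = \mathbf{1}_c$. Cancelling the invertible factor $A_{2m}$ on the left, the system~\eqref{eq67} becomes $\alpha C_q = \beta C_{q+1}$ for $q=1,\dots,n-1$.

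Two ingredients then close the argument. The first is the \emph{binomial identity}, obtained by raising $s_m\alpha + c_m\beta = \mathbf{1}_c$ to the $(n-1)$-th power (using that $\alpha$ and $\beta$ commute):
\[\sum_{q=1}^{n}\binom{n-1}{q-1}c_m^{n-q}s_m^{q-1}\alpha^{q-1}\beta^{n-q} = \mathbf{1}_c.\]
The second is a \emph{shift symmetry} satisfied by any solution: iterating $\alpha C_Q=\beta C_{Q+1}$ gives $\alpha^k C_Q = \beta^k C_{Q+k}$ (for $Q+k\leq n$) and $\beta^j C_Q = \alpha^j C_{Q-j}$ (for $Q-j\geq 1$), and combining the two with the commutativity of $\alpha,\beta$ yields
\[\alpha^{q-1}\beta^{n-q}\,C_{q'} = \alpha^{q'-1}\beta^{n-q'}\,C_q \qquad\text{for all } q,q'\in\{1,\dots,n\},\]
by sliding $|q-q'|$ factors of $\alpha$ or $\beta$ across $C_{q'}$ to shift its subscript to $q$.

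These two ingredients combine neatly: for any solution $(C_1,\ldots,C_n)$ and any $q$,
\[\alpha^{q-1}\beta^{n-q}\,D_m = \sum_{q'}\binom{n-1}{q'-1}c_m^{n-q'}s_m^{q'-1}\alpha^{q-1}\beta^{n-q}\,C_{q'} = \sum_{q'}\binom{n-1}{q'-1}c_m^{n-q'}s_m^{q'-1}\alpha^{q'-1}\beta^{n-q'}\,C_q = C_q,\]
by the shift symmetry and then the binomial identity. Conversely, for any $D\in\End(\Com^c)$ the matrices $C_q:=\alpha^{q-1}\beta^{n-q}D$ trivially satisfy $\alpha C_q=\beta C_{q+1}$, and the binomial identity shows that the definition~\eqref{eq43} of $D_m$ applied to these $(C_q)$ returns $D$. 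Hence $D\mapsto(C_q)$ is a bijection between $\End(\Com^c)$ and the solution space, simultaneously proving that the system has maximal rank $(n-1)c^2$ and identifying $D_m$ as the free parameter.

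Finally, to obtain the matrix form~\eqref{eq64}, substitute $\mu_1\to\mathbf{1}_c$ and $\mu_2\to B_m$ into the defining relation~\eqref{eqsigma} of $\sigma^{n-1}_m$: this immediately gives $\alpha^{q-1}\beta^{n-q} = \sum_p(\sigma^{n-1}_m)_{q-1,p}B_m^p$. The main obstacle is spotting the shift symmetry; once it is in hand, a single chain of equalities delivers both the rank statement and the closed-form solution.
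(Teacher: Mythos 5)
Your proof is correct, but it takes a genuinely different route from the paper's. The paper first proves the rank statement abstractly, by induction on $n$: it shows that the $(n-1)c\times nc$ block-bidiagonal matrices $\mathscr{A}_n$ (and their companions $\mathscr{A}'_n$) have maximal rank, using condition (P2) only in the base case $n=2$ to see that the columns of $A_1$ and $A_2$ span $\Com^c$; the explicit solution \eqref{eq64} and the identification of the free parameter \eqref{eq43} are then merely ``checked by direct computation.'' You instead exploit the invertibility of $A_{2m}$ from the outset to rewrite the system as $\alpha C_q=\beta C_{q+1}$ with $\alpha,\beta$ commuting polynomials in $B_m$ satisfying $s_m\alpha+c_m\beta=\bm{1}_c$, and you exhibit an explicit linear bijection $D\mapsto(\alpha^{q-1}\beta^{n-q}D)_q$ between $\End(\Com^c)$ and the solution space. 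This delivers the rank count and the closed form \eqref{eq64} in a single argument, and --- via the binomial identity and the shift symmetry --- actually explains why $D_m$ is the natural free parameter rather than verifying it after the fact; the translation into the $\sigma^{n-1}_m$ notation by substituting $(\mu_1,\mu_2)\to(\bm{1}_c,B_m)$ in \eqref{eqsigma} is also right. The one point you should make explicit is that (P2) alone only guarantees $\det(\nu_1A_1+\nu_2A_2)\not\equiv 0$; to run your argument you need some $m\in\{0,\dots,c\}$ with $\det A_{2m}\neq 0$, which follows because this determinant is a nonzero homogeneous polynomial of degree $c$ on $\Pu$ and the $c+1$ points $[s_m,c_m]$ are distinct (the paper records exactly this observation just before \eqref{pcnm}). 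With that one-line remark added, your argument is complete, and arguably more informative than the paper's, at the cost of being tied to a choice of $m$ rather than working directly from (P2).
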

\begin{proof}
First we show by induction that the $(n-1)c\times nc$ matrices
\begin{equation*}
\mathscr{A}_{n}=\begin{pmatrix}
A_{1} & -A_{2}\\
& \ddots & \ddots\\
& & A_{1} & -A_{2}
\end{pmatrix}\qquad
\mathscr{A}'_{n}=\begin{pmatrix}
-{}^{t}\mspace{-2mu}A_{2} & {}^{t}\mspace{-2mu}A_{1}\\
& \ddots & \ddots\\
& & -{}^{t}\mspace{-2mu}A_{2} & {}^{t}\mspace{-2mu}A_{1}
\end{pmatrix}
\end{equation*}
have maximal rank for all $n>1$.  For $n=2$ condition (P2) ensures the existence of a point $[\nu_{1},\nu_{2}]\in\Pu$ such that $\det(A_{1}\nu_{1}+A_{2}\nu_{2})
\neq0$; it follows that the columns of $A_{1}$ and $A_{2}$ span a vector space of dimension $c$, so that $\rk\mathscr{A}_{2}=c$. The case of $\mathscr{A}'_{2}$ is analogous.

 {Assume that} the claim holds true for some $k>1$,  {and observe that}
\begin{equation}
\mathscr{A}_{k+1}=
\left(
\begin{array}{c|ccc|c}
\begin{matrix}
A_{1}\\
0\\
\vdots\\
0
\end{matrix} & &
{}^{t}\mathscr{A}'_{k} & &
\begin{matrix}
0\\
\vdots\\
0\\
-A_{2}\\
\end{matrix}
\end{array}
\right)\,.
\label{eqAind}
\end{equation}
Let $v\in\Com^{(k+1)c}$, and decompose it as
\begin{equation*}
v=
\begin{pmatrix}
v_{1}\\
\begin{matrix}
\\[-10pt]
v_{2}\\[-10pt]
\\
\end{matrix}
\\
v_{3}
\end{pmatrix}
\begin{array}{l}
\updownarrow c\\
\left\updownarrow 
\begin{matrix}
\\[-10pt]
(k-1)c\\[-10pt]
\\
\end{matrix}
\right.\\
\updownarrow c\\
\end{array}\,.
\end{equation*}
If $\mathscr{A}_{k+1}v=0$, we get
\begin{equation*}
\left\{
\begin{aligned}
A_{1}v_{1}+{}^{t}\mathscr{A}'_{k}v_{2}&=0\\
{}^{t}\mathscr{A}'_{k}v_{2}&=0\\
{}^{t}\mathscr{A}'_{k}v_{2}-A_{2}v_{3}&=0\,.
\end{aligned}
\right.
\end{equation*}
Since $\ker{}^{t}\mathscr{A}'_{k}=0$ by inductive hypothesis,  it follows that $\mathscr{A}_{k+1}$ has maximal rank. The case of $\mathscr{A}'_{k+1}$ is analogous.
Eq.~\eqref{eq64} is  checked by direct computation and eq.~\eqref{eq43} is obtained by using the invertibility of $\sigma^{n-1}_{m}$.
\end{proof}
Since $E_{m}=D_{m}A_{2m}$, the morphism $\zeta_{m}$ is injective.

Next we prove that $\im\zeta_{m}\subseteq\Hc\times\GL(c, \Com)$ via the following two Lemmas.
\begin{lemma}
\label{lmP1}
For all $(B_{m},E_{m},e;A_{2m})\in\im\zeta_{m}$ one has
\begin{equation}
[B_{m},E_{m}]=0\,.
\label{eqComm}
\end{equation}
\end{lemma}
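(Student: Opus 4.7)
The plan is to reduce the commutator identity to an identity about the original tuple $(A_1,A_2,C_1,\dots,C_n)$, and then dispatch it by direct manipulation of the relations in (P1). First, since $E_m = D_m A_{2m}$ and $B_m = A_{2m}^{-1}A_{1m}$, I would write
\[
B_m E_m - E_m B_m \;=\; A_{2m}^{-1}\bigl(A_{1m} D_m A_{2m} - A_{2m} D_m A_{1m}\bigr),
\]
so (as $A_{2m}$ is invertible on $P^n(c)_m$) it suffices to prove $A_{1m}D_m A_{2m} = A_{2m}D_m A_{1m}$. Expanding $A_{1m} = c_m A_1 - s_m A_2$ and $A_{2m} = s_m A_1 + c_m A_2$ and collecting terms, the contributions $c_m s_m\,A_i D_m A_i$ cancel between the two sides, and the trigonometric identity $c_m^2 + s_m^2 = 1$ collapses the remaining cross-terms to the simpler equivalent identity
\[
A_1 D_m A_2 \;=\; A_2 D_m A_1.
\]

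Next, I would substitute the defining expression $D_m = \sum_{q=1}^{n}\binom{n-1}{q-1}c_m^{\,n-q}s_m^{\,q-1}C_q$ and compare the two sides summand by summand. For each index $q<n$, condition (P1) provides both $A_1 C_q = A_2 C_{q+1}$ and $C_q A_1 = C_{q+1}A_2$, so the $q$-th summand on either side equals $A_2 C_{q+1} A_2$ and cancels in the difference. Only the $q=n$ contribution survives, reducing the problem to the single equality
\[
A_1 C_n A_2 \;=\; A_2 C_n A_1.
\]

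The last identity follows from a two-step \emph{zigzag} using both halves of (P1) at $q=n-1$, namely $C_{n-1}A_1 = C_n A_2$ and $A_1 C_{n-1} = A_2 C_n$:
\[
A_1 C_n A_2 \;=\; A_1 (C_{n-1}A_1) \;=\; (A_1 C_{n-1}) A_1 \;=\; A_2 C_n A_1.
\]
This settles the case $n>1$; for $n=1$ the sum $D_m$ collapses to $C_1$ and the reduced identity is precisely the $n=1$ form of (P1). The only genuinely substantive point is recognising the zigzag that links the two "unused" relations at the top index $q=n-1$; everything else is symbolic bookkeeping, and crucially the argument never uses (P2) or (P3), which is consistent with the fact that $[B_m,E_m]=0$ is a purely polynomial constraint.
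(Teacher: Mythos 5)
Your proof is correct and follows essentially the same route as the paper: both reduce $[B_m,E_m]=0$ to $A_{1m}D_mA_{2m}=A_{2m}D_mA_{1m}$, use the rotation invariance $A_1CA_2-A_2CA_1=A_{1m}CA_{2m}-A_{2m}CA_{1m}$ to pass to $A_1D_mA_2=A_2D_mA_1$, and then invoke (P1). The only difference is one of exposition: the paper simply asserts that (P1) gives $A_1C_qA_2=A_2C_qA_1$ for all $q=1,\dots,n$, while you make explicit the cancellation for $q<n$ and the zigzag through $C_{n-1}A_1=C_nA_2$ and $A_1C_{n-1}=A_2C_n$ needed for the top index.
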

\begin{proof}
For all $n\geq1$ condition (P1) implies that
\begin{equation*}
A_1C_qA_2-A_2C_qA_1=0\qquad \text{for}\quad q=1,\dots,n\,.
\end{equation*}
By recalling eqs. \eqref{eq410} and \eqref{eqBm}, the thesis follows from the identity
\begin{equation*}
A_{1}CA_{2}-A_{2}CA_{1}=A_{1m}CA_{2m}-A_{2m}CA_{1m}\,,
\end{equation*}
which holds true for all $C\in\End(\Com^{c})$ and for $m=0,\dots,c$.
\end{proof}
\begin{lemma}
 Let $(A_{1},A_{2};C_{1},\dots,C_{n};e)\in\End(\Com^{c})^{\oplus(n+2)}\oplus\Hom(\Com^{c},\Com)$ be an $(n+3)$-tuple which satisfies the condition \mbox{\rm(P1)} and $\det A_{2m}\neq0$. Then
 \begin{itemize}
 \item 
if $[\lambda_1,\lambda_2]=[c_m,s_m]$, the condition \mbox{\rm(P3)} is trivially satisfied; 
\item
if $[\lambda_1,\lambda_2]\neq[c_m,s_m]$, the condition \mbox{\rm(P3)} holds true  if and only if the condition \mbox{\rm(T2)} holds true for the triple $(B_{m},E_{m},e)$.
\end{itemize}
\label{lmP3}
\end{lemma}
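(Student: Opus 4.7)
The first bullet is immediate: when $[\lambda_1,\lambda_2]=[c_m,s_m]$ the operator $\lambda_2 A_1+\lambda_1 A_2$ is exactly $A_{2m}$, which is invertible by hypothesis, so no nonzero $v$ can satisfy the first (P3) equation and (P3) is vacuously true at this point.

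For the second bullet I reparametrize by the rotation $\tilde\lambda_1=c_m\lambda_1+s_m\lambda_2$, $\tilde\lambda_2=-s_m\lambda_1+c_m\lambda_2$, under which the excluded direction becomes $\tilde\lambda_2=0$. Normalizing $\tilde\lambda_2=1$ yields $\lambda_1=c_m z-s_m$, $\lambda_2=s_m z+c_m$ with $z=\tilde\lambda_1$, and the constraint $\lambda_1^n\mu_1+\lambda_2^n\mu_2=0$ is solved uniformly by $\mu_1=w\lambda_2^n$, $\mu_2=-w\lambda_1^n$ for a unique $w\in\Com$; this establishes a bijection between admissible pairs $([\lambda_1,\lambda_2],(\mu_1,\mu_2))$ with $[\lambda_1,\lambda_2]\neq[c_m,s_m]$ and $(z,w)\in\Com^2$. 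Using $A_{1m}=A_{2m}B_m$ one checks the operator identity $\lambda_2 A_1+\lambda_1 A_2=A_{2m}(z\,\bm{1}_c+B_m)$, so that the invertibility of $A_{2m}$ converts the first (P3) equation into $B_m v=-zv$, the first (T2) equation.

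The core of the argument is to show that, granted $B_m v=-zv$, condition (P1), and $\det A_{2m}\neq 0$, the two remaining (P3) equations are jointly equivalent to $E_m v=-wv$. Using the expressions $A_1=A_{2m}(s_m\bm{1}_c+c_m B_m)$ and $A_2=A_{2m}(c_m\bm{1}_c-s_m B_m)$ combined with $B_m v=-zv$ one obtains the clean eigen-identities $A_1 v=-\lambda_1 A_{2m}v$ and $A_2 v=\lambda_2 A_{2m}v$. In the direction (P3)$\Rightarrow$(T2), the (P1) chain $C_q A_1=C_{q+1}A_2$ applied to $v$ then reads $\lambda_2 C_{q+1}A_{2m}v=-\lambda_1 C_q A_{2m}v$, which combined with $C_1 A_2 v=-\mu_1 v$ (i.e.\ $\lambda_2 C_1 A_{2m}v=-\mu_1 v$) and the explicit expansion $E_m v=\sum_{q=1}^n\binom{n-1}{q-1}c_m^{n-q}s_m^{q-1}C_q A_{2m}v$ collapses, by the binomial theorem, to $E_m v=-wv$. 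The third (P3) equation $C_n A_1 v=(-1)^n\mu_2 v$ is then equivalent, via the same recurrence at $q=n$, to the constraint $\lambda_1^n\mu_1+\lambda_2^n\mu_2=0$ and hence adds nothing. For the converse (T2)$\Rightarrow$(P3), Lemma~\ref{propSyst} expresses each $C_q$ as a polynomial in $B_m$ applied to $D_m$ through the invertible matrix $\sigma^{n-1}_m$; combined with the commutation $[B_m,E_m]=0$ of Lemma~\ref{lmP1}, one can move $B_m^p$ past $D_m A_{2m}$ and evaluate $C_q A_{2m}v$ in closed form, which, together with the eigen-identities above, recovers all the remaining (P3) equations.

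The main obstacle is the combinatorial collapse, which hinges on the binomial identity $\sum_{q=1}^n\binom{n-1}{q-1}c_m^{n-q}s_m^{q-1}(-\lambda_1/\lambda_2)^{q-1}=\lambda_2^{-(n-1)}$ (and its inverse through $\sigma^{n-1}_m$ for the converse). Writing the argument in terms of $A_1 v=-\lambda_1 A_{2m}v$ and $A_2 v=\lambda_2 A_{2m}v$ keeps it case-free, absorbing the boundary $\lambda_2=0$ (which forces $s_m\neq 0$) into the same formulas, and the case $n=1$ is handled identically since then $D_m=C_1$ and the single (P1) relation $A_1C_1A_2=A_2C_1A_1$ plays the role of the chain.
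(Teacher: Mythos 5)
Your overall strategy coincides with the paper's: reduce the first (P3) equation to $(B_m+z\bm{1}_c)v=0$ via the rotated coordinates and the factorization $\lambda_2A_1+\lambda_1A_2=\lambda A_{2m}(B_m+z\bm{1}_c)$, parametrize the admissible $(\mu_1,\mu_2)$ by a single $w\in\Com$, and use the (P1) relations --- through Lemma \ref{propSyst} and the commutation $[B_m,E_m]=0$ --- to convert the remaining two equations into $(E_m+w\bm{1}_c)v=0$. The first bullet and the converse implication are fine.

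The one genuine weak point is the claim that the third (P3) equation ``adds nothing''. In the direction (P3) $\Rightarrow$ (T2) your derivation of $E_mv=-wv$ runs entirely through the second equation $C_1A_2v=-\mu_1v$ and the recurrence $C_{q+1}A_{2m}v=(-\lambda_1/\lambda_2)\,C_qA_{2m}v$, both of which divide by $\lambda_2$. At the admissible point $[\lambda_1,\lambda_2]=[1,0]$ (which is allowed in the second bullet whenever $s_m\neq0$, i.e.\ for every $m\neq 0$) one has $\mu_1=w\lambda_2^n=0$ and $A_2v=\lambda_2A_{2m}v=0$, so the second equation is vacuous and carries no information about $w$; there the third equation is the \emph{only} one forcing $(E_m+w\bm{1}_c)v=0$, so it is not redundant. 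The boundary case does check out --- the recurrence gives $C_qA_{2m}v=0$ for $q<n$, hence $E_mv=s_m^{n-1}C_nA_{2m}v$, and the third equation then yields $E_mv=-wv$ --- but ``absorbing'' it into formulas containing $\lambda_2^{-1}$ is an assertion, not a proof. The paper sidesteps the case split by keeping the second and third equations symmetrically: after substitution they become $(s_mz+c_m)^n(E_m+w\bm{1}_c)v=0$ and $(-c_mz+s_m)^n(E_m+w\bm{1}_c)v=0$, and since $s_mz+c_m$ and $-c_mz+s_m$ are coprime in $\Com[z]$ the two scalar prefactors never vanish simultaneously. You should either adopt that coprimality observation or write out the $\lambda_2=0$ case explicitly.
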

\begin{proof}
One has
\begin{equation}
\lambda_{2}A_{1}+\lambda_{1}A_{2}=
\begin{cases}
\lambda A_{2m} &\qquad \text{if}\quad [\lambda_1,\lambda_2]=[c_m,s_m]\\
\lambda A_{2m}(B_{m}+z\bm{1}_{c}) &\qquad \text{if}\quad [\lambda_1,\lambda_2]\neq[c_m,s_m]
\end{cases}
\label{eqlAlA}
\end{equation}
for some $\lambda\neq0$, where
\begin{equation*}
z=\frac{c_{m}\lambda_{1}+s_{m}\lambda_{2}}{-s_{m}\lambda_{1}+c_{m}\lambda_{2}}\,.
\end{equation*}
This proves the first statement.
As for the second statement, eq.~\eqref{eq64} yields
\begin{equation}
\begin{aligned}
C_{1}&=(c_{m}\bm{1}_{c}-s_{m}B_{m})^{n-1}E_{m}A_{2m}^{-1}\\
C_{n}&=(s_{m}\bm{1}_{c}+c_{m}B_{m})^{n-1}E_{m}A_{2m}^{-1}\,.
\end{aligned}
\label{eq406}
\end{equation}
Moreover, whenever $[\lambda_1,\lambda_2]\neq[c_m,s_m]$, the condition $\lambda_{1}^{n}\mu_{1}+\lambda_{2}^{n}\mu_{2}=0$ is satisfied if and only if
\begin{equation}
\left\{
\begin{aligned}
\mu_{1}&=(s_{m}z+c_{m})^{n}w\\
\mu_{2}&=-(c_{m}z+s_{m})^{n}w
\end{aligned}
\right.
\label{eqmu}
\end{equation}
for some $w\in\Com$.
Eqs.~\eqref{eqlAlA} and \eqref{eqmu} show the equivalence of the following systems:
\begin{equation*}
\left\{
\begin{aligned}
\left(\lambda_2{A_1}+\lambda_1{A_2}\right)v&=0\\
(C_{1}A_{2}+\mu_{1}\bm{1}_{c})v&=0\\
(C_{n}A_{1}+(-1)^{n-1}\mu_{2}\bm{1}_{c})v&=0
\end{aligned}\right.\qquad\Longleftrightarrow\qquad
\left\{
\begin{aligned}
(B_{m}+z\bm{1}_{c})v&=0\\
(s_{m}z+c_{m})^{n}(E_{m}+w\bm{1}_{c})v&=0\\
(-c_{m}z+s_{m})^{n}(E_{m}+w\bm{1}_{c})v&=0\,.
\end{aligned}
\right.
\end{equation*}
Since the polynomials $s_{m}z+c_{m}$ and $-c_{m}z+s_{m}$ are coprime in $\Com[z]$, the right-hand system is equivalent to
\begin{equation*}
\left\{
\begin{aligned}
(B_{m}+z\bm{1}_{c})v&=0\\
(E_{m}+w\bm{1}_{c})v&=0\,.
\end{aligned}
\right.
\end{equation*}
\end{proof}
Finally we prove that $\Hc\times\GL(c, \Com)\subseteq\im\zeta_{m}$. Let $(b_{1},b_{2},e;A)\in \Hc\times\GL(c, \Com)$; if we set
\begin{equation*}
\begin{aligned}
A_{1}&=A(c_{m}b_{1}+s_{m}\bm{1}_{c})\,,\\
A_{2}&=A(-s_{m}b_{1}+c_{m}\bm{1}_{c})\,,
\end{aligned}
\end{equation*}
\begin{equation}
\begin{pmatrix}
C_{1}\\
\vdots\\
\vdots\\
C_{n}
\end{pmatrix} =(\sigma^{n-1}_{m}\otimes\bm{1}_{c})
\begin{pmatrix}
\bm{1}_{c}\\
b_{1}\\
\vdots\\
b_{1}^{n-1}
\end{pmatrix}
b_{2}A^{-1}\,,
\label{eqACe}
\end{equation}
then $(A_1,A_2;C_1,\dots,C_n;e)\in P^n(c)_m$ and $\zeta_{m}(A_1,A_2;C_1,\dots,C_n;e)=(b_{1},b_{2},e;A)$. It is an easy matter to verify by substitution that the condition (P1) holds. Notice now that by substituting \eqref{eqACe} into eq.~\eqref{eq410} one gets
\begin{equation*}
A_{1m}=Ab_{1}\qquad\text{,}\qquad
A_{2m}=A\qquad\text{,}\qquad
E_{m}=b_{2}\,.
\end{equation*}
This shows that $A_{2m}$ is invertible, and in particular the condition (P2) holds true. By eq.~\eqref{eqBm} one has that $B_{m}=b_{1}$, so that the validity of the condition (P3) follows from the condition (T2)  by Lemma \ref{lmP3}. This concludes the proof of Proposition \ref{lmTomega}.\qed

We now compute the transition functions on the intersections $P^{n}(c)_{ml}=P^{n}(c)_{m}\cap P^{n}(c)_{l}$, for $m,l=0,\dots,c$. First observe that
\begin{equation*}
\zeta_{m}\left(P^{n}(c)_{ml}\right)=\Hc_{m,l}\times\GL(c, \Com)\,.
\end{equation*}
This fact is a consequence of the identity
\begin{equation}
\begin{split}
A_{2l}&=
\begin{pmatrix}
s_{l}\bm{1}_{c} & c_{l}\bm{1}_{c}
\end{pmatrix}
\begin{pmatrix}
c_{m}\bm{1}_{c} & s_{m}\bm{1}_{c}\\
-s_{m}\bm{1}_{c} & c_{m}\bm{1}_{c}
\end{pmatrix}
\begin{pmatrix}
A_{1m}\\
A_{2m}
\end{pmatrix}
=A_{2m}(c_{m-l}\bm{1}_{c}-s_{m-l}B_{m})\,.
\end{split}
\label{eqA2l}
\end{equation}
\begin{prop}
\label{proGlue}
One has the commutative triangle
\begin{equation}
\xymatrix{
& P^{n}(c)_{ml} \ar[ld]_{\zeta_{m,l}} \ar[rd]^{\zeta_{l,m}}\\
\Hc_{m,l}\times\GL(c, \Com) \ar[rr]^-{\omega_{lm}} & & \Hc_{l,m}\times\GL(c, \Com)\,,
}
\label{eqtri}
\end{equation}
where $\zeta_{m,l}$ and $\zeta_{l,m}$ are the restrictions of $\zeta_{m}$ and $\zeta_{l}$, respectively, and
\begin{equation*}
\omega_{lm}(B_{m},E_{m},e;A_{2m})=\left(\tilde{\varphi}_{lm}(B_{m},E_{m},e),A_{2m}(c_{m-l}\bm{1}_{c}-s_{m-l}B_{m})\right)\,,
\end{equation*}
the functions $\tilde{\varphi}_{lm}$ being defined as in Proposition \ref{pro1}. The transition functions $\omega_{lm}$ are $\GL(c, \Com)^{\times 2}$-equivariant.
\end{prop}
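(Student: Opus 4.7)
The plan is to verify the commutativity of \eqref{eqtri} by computing $\zeta_l$ at a point of $P^n(c)_{ml}$ coordinate by coordinate and matching the result against $\omega_{lm}\circ\zeta_{m,l}$; the $\GL(c,\Com)^{\times 2}$-equivariance then reduces to a short substitution using \eqref{eqrhol}. First, the identity \eqref{eqA2l} already produces the second component of $\omega_{lm}$, and it simultaneously confirms that $\zeta_m(P^n(c)_{ml}) \subseteq \Hc_{m,l}\times\GL(c,\Com)$, since the nonvanishing of $\det A_{2l}$ on $P^n(c)_l$ translates exactly into $\det(c_{m-l}\bm{1}_c - s_{m-l}B_m)\neq 0$.

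For the first component of $\tilde{\varphi}_{lm}$, I would run the same rotation argument on $A_{1l} = c_l A_1 - s_l A_2$: expressing $A_{1l}$ in the $(A_{1m},A_{2m})$ basis and using $A_{1m} = A_{2m}B_m$ yields $A_{1l} = A_{2m}(s_{m-l}\bm{1}_c + c_{m-l}B_m)$, so
\begin{equation*}
B_l = A_{2l}^{-1} A_{1l} = (c_{m-l}\bm{1}_c - s_{m-l}B_m)^{-1}(s_{m-l}\bm{1}_c + c_{m-l}B_m).
\end{equation*}
For $E_l$, I would invoke Lemma \ref{propSyst}: writing $(C_1,\dots,C_n)^T$ in both the $m$-frame and the $l$-frame, substituting the first expression into the defining sum \eqref{eq43} for $D_l$, and collapsing the resulting double sum via the polynomial identity \eqref{eqsigma} applied with $(\mu_1,\mu_2)$ rotated by the angle $\theta_l-\theta_m$, I would obtain $D_l = (c_{m-l}\bm{1}_c - s_{m-l}B_m)^{n-1}D_m$. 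Multiplying on the right by $A_{2l}$ via \eqref{eqA2l}, and then using Lemma \ref{lmP1} to commute the scalar polynomial in $B_m$ past $E_m = D_m A_{2m}$, gives $E_l = (c_{m-l}\bm{1}_c - s_{m-l}B_m)^n E_m$, matching the second component of $\tilde{\varphi}_{lm}$; the identity $e_l = e$ is immediate from \eqref{eqACe}.

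Once the triangle is commutative, the equivariance of $\omega_{lm}$ is routine: under \eqref{eqrhol}, $\phi_1$ acts by conjugation on $B_m$ and $E_m$ (hence on every polynomial in $B_m$), $A_{2m}$ transforms by $\phi_2(\,\cdot\,)\phi_1^{-1}$, and $e$ by right multiplication by $\phi_1^{-1}$, so each of the four components of $\omega_{lm}$ is checked separately to intertwine source and target actions. The hard part I anticipate is the derivation $D_l = (c_{m-l}\bm{1}_c - s_{m-l}B_m)^{n-1}D_m$: although the underlying binomial identity is elementary, it requires careful bookkeeping across the $\sigma$-matrices, and one must apply $[B_m,E_m]=0$ at the precise place in $D_l A_{2l}$ where the trailing factor $(c_{m-l}\bm{1}_c - s_{m-l}B_m)$ must slide past $E_m$. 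The rest of the proof is essentially substitution.
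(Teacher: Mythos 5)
Your proposal is correct and follows essentially the same route as the paper's proof: the identity \eqref{eqA2l} together with the analogous computation $A_{1l}=A_{2m}(s_{m-l}\bm{1}_c+c_{m-l}B_m)$ gives $B_l$ and the $\GL(c,\Com)$-component, while $E_l$ is obtained exactly as you describe, from \eqref{eq64}, the multiplicativity $\sigma^{n-1}_{-l}\sigma^{n-1}_{m}=\sigma^{n-1}_{m-l}$, and Lemma \ref{lmP1} to slide the trailing factor $(c_{m-l}\bm{1}_c-s_{m-l}B_m)$ past $E_m$. The equivariance check is likewise dismissed as routine in the paper.
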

\begin{proof}
We want to express $(B_l,E_l,e;A_{2l})$ in terms of $(B_m,E_m,e;A_{2m})$. We already have eq.~\eqref{eqA2l}; analogously, one can prove $A_{1l}=A_{2m}(s_{m-l}\bm{1}_{c}+c_{m-l}B_{m})$. From that, it follows that $B_{l}=(c_{m-l}\bm{1}_{c}-s_{m-l}B_{m})^{-1}(s_{m-l}\bm{1}_{c}+c_{m-l}B_{m})$.

As for $E_{l}$, one has
\begin{equation*}
\begin{split}
E_{l}&=\left[\sum_{p=1}^{n}\sigma^{n-1}_{-l;0,p-1} C_{p}\right]A_{2l}=\\
&=\left[\sum_{p=0}^{n-1} \sigma^{n-1}_{m-l;0p}B_{m}^{p}\right]E_{m}A_{2m}^{-1}A_{2l}=\\
&=(c_{l-m}\bm{1}_{c}-s_{l-m}B_{m})^{n}E_{m}\,,
\end{split}
\end{equation*}
where we have used eq. \eqref{eq64}, the relation $\sigma^{n-1}_{m-l}=\sigma^{n-1}_{-l}\sigma^{n-1}_{m}$ and Lemma \ref{lmP1}.

The equivariance of $\omega_{lm}$ is straightforward, and this completes the proof.
\end{proof}

By Proposition \ref{lmTomega} and Lemma \ref{lemmaquot} the immersion $\Hc\lhra \Hc\times\{\bm{1}_{c}\}$ induces an isomorphism
\begin{equation}
P^{n}(c)_{m}/\GL(c, \Com)^{\times 2}\simeq \Hc/\Delta\,,
\label{eqDelta}
\end{equation}
where $\Delta\subset\GL(c, \Com)^{\times 2}$ is the diagonal subgroup. By comparing eqs.~\eqref{eqGL(c)onK} and \eqref{eqrhol}, it turns out that $\Hc/\Delta=\Hc/\GL(c, \Com)$. It follows that
\begin{equation*}
P^{n}(c)_{m}/\GL(c, \Com)^{\times 2}\simeq \M^{n}(c)_{m}\,.
\end{equation*}
Recall   that  $\Hc$ is a  principal $\GL(c, \Com)$-bundle over $\Hc/\GL(c, \Com)$ \cite{Nakabook}. Now, by Proposition  \ref{lmTomega} there is an isomorphism $P^{n}(c)_{m}\simeq \Hc \times \GL(c, \Com)$ which is well-behaved with respect to the group actions;  as a consequence, $P^{n}(c)_{m}$ is a locally trivial principal $\GL(c, \Com)^{\times 2}$-bundle over $\M^{n}(c)_{m}$. 
Finally, Propositions \ref{pro1} and \ref{proGlue} ensure that this property globalizes, in the sense that $P^n(c)$ is a locally trivial principal $\GL(c, \Com)^{\times 2}$-bundle and this completes the proof of Theorem \ref{thm0}.

\bigskip
\section{Some geometrical constructions}
The projection $q_n\colon \tot(\Ol_{\Pu}(-n)) \lra \Pu$ induces a morphism 
$$p_{n,c}\colon \Hilb^c(\tot(\Ol_{\Pu}(-n))) \lra \mathbb P^c$$ 
defined as the composition
$$ \Hilb^c(\tot(\Ol_{\Pu}(-n))) \xrightarrow{\pi_{n,c}} S^c \tot(\Ol_{\Pu}(-n)) \xrightarrow{q_n^{(c)}} S^c \Pu = \mathbb P^c\,,$$
where $\pi_{n,c}$ is the Hilbert-Chow morphism. This morphism can be described in terms of ADHM data, as the following result essentially shows.
Let $N(c)$ be the space of pairs $(A_1,A_2)$ of $c\times c$ complex matrices satisfying property (P2), see the beginning of Section 
\ref{sectionmain}. The group $\GL(c, \Com)^{\times 2}$ acts on $N(c)$ as in equation \eqref{eqrho}.
\begin{prop} There is a commutative diagram of morphisms of schemes
\begin{equation}\label{diag}\xymatrix{
P^n(c) \ar[r] \ar[d]_{h_{n,c}} & \Hilb^c(\tot(\Ol_{\Pu}(-n)))  \ar[d]^{p_{n,c}} \\
N(c) \ar[r]^{g_c} & \mathbb P^c\,,} 
\end{equation}
where $g_c$ is the categorical quotient (in the sense of geometric invariant theory), and $h_{n,c}$, with reference to the notation in  the beginning of Section \ref{sectionmain}, is the morphism $$h_{n,c}(A_1,A_2;C_1,\dots,C_n;e) = (A_1,A_2)\,.$$
\end{prop}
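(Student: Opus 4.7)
My plan is to first define $g_c$ and identify it as a GIT categorical quotient, and then verify commutativity of \eqref{diag} on the open cover $\{P^n(c)_m\}_{m=0,\dots,c}$ of Section \ref{secLoc}.

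I would set
\begin{equation*}
g_c(A_1,A_2)=\bigl[\det(\nu_1 A_1+\nu_2 A_2)\bigr]\in\Ps^c\simeq S^c\Pu,
\end{equation*}
viewing $\det(\nu_1 A_1+\nu_2 A_2)$ as a homogeneous polynomial of degree $c$ in $(\nu_1,\nu_2)$; this is well-defined on $N(c)$ by (P2), and under \eqref{eqrho} is multiplied by the character $\chi(\phi_1,\phi_2)=\det(\phi_2)\det(\phi_1)^{-1}$, so it descends to a morphism on the quotient. To identify $g_c$ as a GIT quotient I would linearise the $\GL(c,\Com)^{\times 2}$-action on the trivial bundle over $\End(\Com^c)^{\oplus 2}$ by $\chi$: it is classical for the two-arrow Kronecker quiver with dimension vector $(c,c)$ that the weight-$k\chi$ semi-invariants are spanned by the coefficients of $\det(\nu_1 A_1+\nu_2 A_2)^k$, and that the corresponding semistable locus is precisely $N(c)$; the associated projective GIT quotient is therefore $\Ps^c$, with $g_c$ the quotient morphism.

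For commutativity of \eqref{diag} it suffices to work on each $P^n(c)_m$. By Proposition \ref{lmTomega}, a point $(A_1,A_2;C_1,\dots,C_n;e)\in P^n(c)_m$ corresponds to ADHM data $(B_m,E_m,e)\in\Hc$ with $B_m=A_{2m}^{-1}A_{1m}$, and by Proposition \ref{proopen} this triple represents the image in $\Hilb^c(\tot(\Ol_{\Pu}(-n)))$ of the original tuple under $\M^n(c)_m\simeq\Hilb^c(\Com^2)$. The classical ADHM description of the Hilbert-Chow morphism (\cite{Nakabook}) identifies the support with multiplicities of this subscheme with the joint spectrum of $(B_m,E_m)$ in $\Com^2$; projecting via $q_n$ retains only the eigenvalues of $B_m$, so the projected divisor in the affine chart $\Com\subset\Pu\setminus\{[c_m,s_m]\}$ associated to $\M^n(c)_m$ coincides with the zero locus of $\det(z\bm{1}_c-B_m)$. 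Using $B_m=A_{2m}^{-1}A_{1m}$ together with \eqref{eq410}, one computes
\begin{equation*}
\det(z\bm{1}_c-B_m)=\det(A_{2m})^{-1}\det\!\bigl((zs_m-c_m)A_1+(zc_m+s_m)A_2\bigr),
\end{equation*}
which, after homogenising $[\nu_1,\nu_2]=[zs_m-c_m,\,zc_m+s_m]$, becomes $\det(A_{2m})^{-1}\det(\nu_1A_1+\nu_2A_2)$; since $\det(A_{2m})^{-1}$ is nowhere zero on $P^n(c)_m$, the two resulting divisors in $\Ps^c$ agree.

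The main obstacle I foresee is the coordinate bookkeeping: one must identify the affine chart $\Com\subset\Pu\setminus\{[c_m,s_m]\}$ implicit in the local ADHM description of $\M^n(c)_m$ with the parameterisation $[\nu_1,\nu_2]=[zs_m-c_m,\,zc_m+s_m]$ arising from the determinantal identity, consistently with the rotation conventions fixed in Section \ref{secLoc}. Once this identification is pinned down, the displayed identity produces commutativity of \eqref{diag} on each $P^n(c)_m$, and since the $P^n(c)_m$ cover $P^n(c)$, commutativity of the full diagram follows.
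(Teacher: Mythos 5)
Your proposal is correct and follows essentially the same route as the paper: both arguments reduce commutativity to the open cover $\{P^n(c)_m\}$, pass to the local ADHM data $(B_m,E_m,e;A_{2m})$ of Proposition \ref{lmTomega}, and invoke Nakajima's description of the Hilbert--Chow morphism for $\Hilb^c(\Com^2)$ to identify $g_c\circ h_{n,c}$ on $P^n(c)_m$ with the elementary symmetric functions of the eigenvalues of $B_m$. The only difference is that you spell out the identification of $g_c$ with the Kronecker-quiver GIT quotient and the determinantal identity relating $\det(z\bm{1}_c-B_m)$ to $\det(\nu_1A_1+\nu_2A_2)$, details the paper leaves implicit.
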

\begin{proof} We introduce the open affine cover $\{U_m\}_{m=0,\dots c}$ of $\mathbb P^c$
$$ U_m = \{[x_0,\dots,x_c]\in\mathbb P^c \, \vert \, \sum_{p=0}^c \sigma^c_{m;p0}\,x_p \neq 0\}\simeq \mathbb C^c\,,$$
where the matrices $\sigma^c_{m}$ are defined in \eqref{eqsigma}. The inverse images $N_m=g_c^{-1}(U_m)$ yield  an affine open cover of $N(c)$.
The open  subsets $h_{n,c}^{-1}(N_m)\subset P^n(c)$ are exactly the sets $P^n(c)_m$ defined in equation \eqref{pcnm}. The composition 
$g_c\circ h_{n,c}$ on $P^n(c)_m$ can be identified with the map that to the quadruple $(B_m,E_m,e;A_{2m})$ (cf.\ Proposition \ref{lmTomega}) associates the evaluation of the symmetric elementary functions on the eigenvalues of $B_m$. Since checking the commutativity of the diagram \eqref{diag} is a local matter, and locally our ADHM data coincide with those for the Hilbert scheme of $\mathbb C^2$ as in    \cite{Nakabook},
we can proceed as in  \cite[p.\ 10]{Nakabook}.
\end{proof}

\bigskip
\appendix \section{Proofs of Propositions \ref{proopen} and \ref{pro1}}
In this Appendix, after giving some preliminary results, we provide proofs of  Propositions \ref{proopen} and \ref{pro1}.
 \subsection{A lemma about quotients}
 
If  $X$ is a smooth algebraic variety over $\Com$ with a (left) action
$\gamma\colon X\times G \to  X\times X$ 
 of a complex algebraic group $G$, the set-theoretical quotient $X/G$ has a natural structure of ringed space induced by the quotient map $q\colon X\lra X/G$.
 If the action is free, and the image of the morphism $\gamma$ 
is closed,   $X/G$ is a smooth algebraic variety, and the pair $(X/G,p)$ is a geometric quotient of $X$ modulo $G$. More precisely,
$X$ is a (locally isotrivial) principle $G$-bundle over $X/G$.
A proof of this fact was given in  \cite[Thm.~5.1]{bbr}.

Let $X$ be a smooth algebraic variety over $\mathbb C$, let $Y$ be a smooth closed subvariety; moreover let $G$ be a complex linear algebraic group, and $H$ a closed subgroup. Assume that $G$ acts on $X$ and $H$ on $Y$ so that the inclusion $j\colon Y \hookrightarrow X$ is $H$-equivariant.   We consider the quotients $q\colon X\lra X/G$ and $p\colon Y\lra Y/H$ as ringed spaces with the quotient topology, and structure sheaves given by the sheaves of invariant functions.

 \begin{lemma} \label{lemmaquot}
 Assume that the action of $G$ on $X$ is free, and that  the image of $\gamma\colon X\times G  \to X\times X$ is closed.
Moreover, assume that 
\begin{itemize} \item
the intersection of $\im j$ with every $G$-orbit in $X$ is nonempty;
\item
for all $G$-orbits $O_{G}$ in $X$, one has $\Stab_{G}(O_{G}\cap\im j)=\im\iota$.
\end{itemize}
Then $j$ induces an isomorphism $\bar{\jmath}$ of algebraic varieties between $Y/H$ and $X/G$.
\end{lemma}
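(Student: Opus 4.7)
The plan is to realize $\bar{\jmath}$ as a bijective morphism of smooth algebraic varieties and then show that its differential is an isomorphism at every point. First I would apply \cite[Thm.~5.1]{bbr} to the $H$-action on $Y$: the action is free since it is the restriction of the free $G$-action, and the image of $H\times Y\to Y\times Y$ equals the intersection with $Y\times Y$ of the preimage of the closed subgroup $H\subseteq G$ under the natural morphism $\im\gamma\to G$ sending a pair in the same $G$-orbit to the unique element connecting them (well defined because $q:X\to X/G$ is a principal $G$-bundle). Hence $Y/H$ is a smooth algebraic variety and $p:Y\to Y/H$ is a principal $H$-bundle. Since $q\circ j$ is $H$-invariant, it factors uniquely through a morphism $\bar{\jmath}:Y/H\to X/G$.

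Bijectivity on $\Com$-points then follows directly from the hypotheses. Surjectivity is a restatement of (i). For injectivity, two points $y_1,y_2\in Y$ mapping to the same point of $X/G$ lie in a common $G$-orbit $O_G$, so there is a unique $g\in G$ with $g\cdot j(y_1)=j(y_2)$. Identifying $O_G$ with $G$ via the base point $j(y_1)$ (so that $O_G\cap\im j$ corresponds to a subset of $G$ containing $1$), the hypothesis $\Stab_G(O_G\cap\im j)=H$ translates into the statement that $H$ acts transitively on $O_G\cap\im j$, whence $g\in H$ and $p(y_1)=p(y_2)$.

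The last step is the infinitesimal comparison. The principal bundle structures give short exact sequences
$0\to\mathfrak{h}\to T_yY\to T_{p(y)}(Y/H)\to 0$ and $0\to\mathfrak{g}\to T_{j(y)}X\to T_{q(j(y))}(X/G)\to 0$,
in which the leftmost maps are the infinitesimal actions. The differential $d\bar{\jmath}$ is induced by $dj:T_yY\hookrightarrow T_{j(y)}X$, so its injectivity amounts to the identity $dj(T_yY)\cap(\mathfrak{g}\cdot j(y))=\mathfrak{h}\cdot y$; this is the infinitesimal form of (ii), since if $\xi\in\mathfrak{g}$ has $\xi\cdot j(y)\in dj(T_yY)$, the flow $\exp(t\xi)$ stabilizes $O_G\cap\im j$ to first order at $j(y)$, forcing $\exp(t\xi)\in\Stab_G(O_G\cap\im j)=H$, and so $\xi\in\mathfrak{h}$. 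Surjectivity of $d\bar{\jmath}$ is then the dimension count $\dim(Y/H)=\dim(X/G)$, itself obtained from (i) and (ii). A bijective morphism of smooth complex algebraic varieties with everywhere bijective differential is an isomorphism, completing the argument.

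The step I expect to be the main obstacle is the infinitesimal analysis: turning the \emph{setwise} stabilizer condition (ii) into a tangent-space statement. Passing from ``$\xi\cdot j(y)$ is tangent to $Y$'' to ``$\exp(t\xi)\in H$ for small $t$'' requires integrating the infinitesimal action inside $X$ and using both the smoothness of $Y$ and the closedness of $\im\gamma$ to ensure that the curve $\exp(t\xi)\cdot j(y)$ remains in $\im j$ to every order, legitimizing the application of the global stabilizer hypothesis.
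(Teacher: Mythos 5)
Your route is genuinely different from the paper's, so it is worth recording what the paper actually does: since $\gamma$ is a closed immersion, hence proper, the quotient map $q\colon X\to X/G$ is affine by \cite[Prop.~0.7]{Mum}; the paper then restricts to an affine open $V=\Spec A$ lying over $U=\Spec(A^{G})$, sets $W=j^{-1}(V)=\Spec B$ with image $\Spec(B^{H})$, and checks that restriction of functions induces an isomorphism $A^{G}\simeq B^{H}$, which glues to give $\bar{\jmath}$. No tangent-space computation appears anywhere. Your plan (a bijective morphism of smooth varieties with everywhere-bijective differential) is a legitimate alternative in outline, and your first step --- applying \cite[Thm.~5.1]{bbr} to the $H$-action on $Y$ after checking closedness of the graph, then factoring $q\circ j$ through $p$ --- is fine.

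There are, however, two genuine gaps. The decisive one is the infinitesimal step, which you flag yourself but do not close: from $\xi\cdot j(y)\in dj(T_{y}Y)$ you cannot conclude that the curve $\exp(t\xi)\cdot j(y)$ stays in $\im j$, so you have no license to invoke the global stabilizer hypothesis and deduce $\xi\in\mathfrak{h}$; first-order tangency of an orbit curve to a closed subvariety does not propagate to higher order, and this implication is exactly where all the content of the lemma sits. The good news is that the step is unnecessary: over $\Com$ a bijective morphism onto a smooth (hence normal) variety is an isomorphism by Zariski's main theorem, so smoothness of the two quotients together with bijectivity of $\bar{\jmath}$ already suffices. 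The second gap is in the bijectivity argument itself: identifying $O_{G}$ with $G$ via the base point $j(y_{1})$, the condition $\Stab_{G}(T)=H$ for a subset $T\ni 1$ gives $H\subseteq T$ but not $T\subseteq H$ (take $G=\Com^{*}$, $H=\{1\}$, $T=\{1,2\}$: the setwise stabilizer is trivial), so transitivity of $H$ on $O_{G}\cap\im j$ does not follow formally from the hypothesis as written. What is actually verified in the paper's application is that $O_{G}\cap\im j$ is a single $H$-orbit (this is what the descending chain of normalizations in the proof of Proposition \ref{proopen} establishes), and that is the hypothesis your injectivity step --- and indeed the lemma itself --- really needs.
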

\begin{cor} $X \to X/G$ and $Y\to Y/H$ are both principal bundles, and the second is a reduction of the structure group of the first.
If  $X \to X/G$  is locally trivial (and not only locally isotrivial), the same is true for $Y\to Y/H$.
\end{cor}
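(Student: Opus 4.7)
The plan is to prove the corollary in three steps matching the statement: verify that $Y\to Y/H$ is a principal $H$-bundle, exhibit it as a reduction of the $G$-bundle $X\to X/G$, and finally transfer Zariski local triviality from $X$ to $Y$.

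First, I would show that $Y\to Y/H$ is a principal $H$-bundle by invoking the same criterion (Thm.~5.1 of \cite{bbr}) already used for $X\to X/G$. The $H$-action on $Y$ is free, being the restriction of the free $G$-action on $X$. The only substantive point is that the image of the action map $Y \times H \to Y\times Y$ is closed. I would argue that this image coincides with $\im(\gamma)\cap (Y\times Y)$: if $(y_1, y_2)\in Y\times Y$ satisfies $y_2 = g\cdot y_1$ for some $g\in G$, then by the stabilizer hypothesis of Lemma~\ref{lemmaquot} the element $g$ can be replaced by an element of $H$. As the intersection of the closed set $\im(\gamma)\subseteq X\times X$ with the closed subvariety $Y\times Y$, this image is closed in $Y\times Y$.

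Next, I would construct the reduction of structure group explicitly. Define the $G$-equivariant morphism
\begin{equation*}
\Phi\colon Y\times_H G \longrightarrow X\,, \qquad [(y, g)]\longmapsto g\cdot y\,.
\end{equation*}
It covers the isomorphism $\bar\jmath\colon Y/H\xrightarrow{\sim} X/G$ of Lemma~\ref{lemmaquot}, so both source and target are principal $G$-bundles over the common base $B := Y/H\simeq X/G$. The orbit-intersection hypotheses of Lemma~\ref{lemmaquot} imply that $\Phi$ is bijective on each fiber; a fiberwise-bijective $G$-equivariant morphism of principal $G$-bundles over a common smooth base is automatically an isomorphism (each fiber of either bundle is a $G$-torsor). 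This identification exhibits $Y\to B$ as an $H$-reduction of the $G$-bundle $X\to B$.

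The hardest step will be the last one. Starting from a Zariski-local trivialization $X\vert_U \simeq U\times G$, composition with $\Phi^{-1}$ yields $(Y\times_H G)\vert_U \simeq U\times G$, under which the closed subvariety $Y\vert_U$ corresponds to $\{(u, g)\in U\times G : gH = \sigma(u)\}$ for a classifying morphism $\sigma\colon U \to G/H$ of the reduction. Finding a local section of $Y\vert_U\to U$ is then the same data as producing a lift of $\sigma$ to a morphism $U\to G$ after possibly shrinking $U$. The obstacle is that such a lift exists Zariski-locally only under the additional assumption that the principal $H$-bundle $G\to G/H$ admits local sections; in the setting in which the corollary is applied in this paper, namely $G = \GL(c,\Com)^{\times 2}$ and $H = \Delta \simeq \GL(c,\Com)$ via \eqref{eqiota}, the quotient $G\to G/H\simeq \GL(c,\Com)$ is in fact globally split (for instance by $g\mapsto (g,\bm{1}_c)$), so the required lift exists and $Y\to Y/H$ inherits Zariski-local triviality from $X\to X/G$.
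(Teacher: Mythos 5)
Your argument is essentially correct, but it takes a genuinely different route from the paper's. The proof printed under this corollary is really a proof of Lemma \ref{lemmaquot}: using properness of $\gamma$ and \cite[Prop.~0.7]{Mum} it shows that $q$ and $p$ are affine, passes to the rings of invariants $A^{G}\hookrightarrow A$ and $B^{H}\hookrightarrow B$, and checks that the induced map on invariants is an isomorphism, which produces $\bar{\jmath}$; the torsor-theoretic assertions of the corollary are then left as immediate consequences of \cite[Thm.~5.1]{bbr} and are not argued separately. You instead verify the hypotheses of that theorem directly for the pair $(Y,H)$ --- freeness by restriction, and closedness of the image of $Y\times H\to Y\times Y$ by identifying it with $\im\gamma\cap(Y\times Y)$ --- and then make the reduction of structure group explicit via $Y\times_{H}G\xrightarrow{\ \sim\ }X$. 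Two small remarks on that first step: freeness of the $G$-action forces the connecting element $g$ itself to lie in $H$ (not merely to be replaceable by an element of $H$), and you are reading the stabilizer hypothesis as saying that each $G$-orbit meets $Y$ in a single $H$-orbit; this reading is the one the Lemma itself needs for $\bar{\jmath}$ to be injective, so you are using exactly the same implicit strengthening the paper does. Your version is more self-contained and makes visible where each hypothesis is used; the paper's gets the key isomorphism with less machinery but leaves the bundle statements implicit.

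The most valuable part of your proposal is the last step, where you uncover a real subtlety: transferring Zariski-local triviality from $X\to X/G$ to $Y\to Y/H$ amounts to lifting the classifying map $\sigma\colon U\to G/H$ locally through $G\to G/H$, and this is \emph{not} automatic for an arbitrary closed subgroup $H\subseteq G$. (Taking $X=(G/H)\times G$ with $Y$ the graph of the projection satisfies all hypotheses of Lemma \ref{lemmaquot}, and for $H=\{\pm\bm{1}_{2}\}\subset G=\GL(2,\Com)$ the bundle $Y\to Y/H$ is $G\to G/H$, which is not Zariski-locally trivial; so the corollary as literally stated needs an extra hypothesis.) Your resolution --- observing that in every application in the paper $H$ is the graph of a homomorphism into the complementary factors of a product, namely $\Delta\subset\GL(c,\Com)^{\times 2}$ and $\im\iota\subset\Gk$ from \eqref{eqiota}, so that $G\to G/H$ splits globally --- is exactly the right fix, but it should be promoted to an explicit hypothesis of the corollary rather than appearing only in the final sentence.
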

\begin{proof}
Since $\gamma$ is a closed immersion, it is proper. Hence by [11, Prop. 0.7] the morphism
$q$ is affine. Then if  $U\subset X/G$ is an open affine subset,   $V=q^{-1}(U)$ is   affine, $V=\Spec A$, so that $U= \Spec(A^{G})$, and the restricted morphism $q|_{V}$ is induced by the canonical injection
 $q^{\sharp}\colon A ^{G}\hookrightarrow A$. 
Since $j$ is an affine morphism \cite[Prop.~1.6.2.(i)]{EGA2}, the counterimage $W=j^{-1}(V)$ is affine, $W=\Spec B$, and by the equivariance of $j$ it is $H$-invariant. It follows that its image $p(W)=\Spec(B^{H})$ is affine, and the restricted morphism $p|_{W}$ is induced by the canonical injection $p^{\sharp}\colon B^{G}\hookrightarrow B$. 
Let $j^{\sharp}\colon A \to B $ be the homomorphism associated to $j$. It is easy to prove that $\im\left(j^{\sharp}\circ p^{\sharp}\right)\subseteq A^{G}$, and that this composition is an isomorphism, which induces $\bar{\jmath}$.
\end{proof}

\label{appBch}
\subsection{Premilinaries}
As we recalled in the Introduction, for any isomorphism class $[(\E, \theta)]$ in the moduli space $\Mk$ of framed sheaves on $\Sigma_n$, the underlying sheaf  $\E$ is isomorphic to the cohomology of a monad 
\begin{equation}\label{fundamentalmonad2}
\xymatrix{
M(\alpha,\beta):&0 \ar[r] & \Uk \ar[r]^-{\alpha} & \Vk \ar[r]^-{\beta} & \Wk \ar[r] & 0
}\,, 
\end{equation}
where $\vec{k} = (n,r,a,c)$.
To express the pair of morphisms $(\alpha,\beta)$ as a pair of matrices,
we select suitable bases for the space
\begin{align*}
\Hom&(\Uk,\Vk) \oplus\Hom(\Vk,\Wk)=\\
=&\left[\Hom\left(\Com^{k_1},\Com^{k_2}\right)\otimes H^{0}\left(\On(1,0)\right)\right]\oplus
\left[\Hom\left(\Com^{k_1},\Com^{k_4}\right)\otimes H^{0}\left(\On(0,1)\right)\right]\oplus\\
&\left[\Hom\left(\Com^{k_2},\Com^{k_3}\right)\otimes H^{0}\left(\On(0,1)\right)\right]\oplus
\left[\Hom\left(\Com^{k_4},\Com^{k_3}\right)\otimes H^{0}\left(\On(1,0)\right)\right],
\end{align*}
where the integers $k_1$, $k_2$, $k_3$, $k_4$ are specified in eq.~\eqref{k_i}. 
To this aim, after fixing homogeneous coordinates $[y_1,y_2]$ for $\Pu$, we introduce additional $c$ pairs of coordinates 
\begin{equation*}
[y_{1m},y_{2m}]=[c_{m}y_{1}+s_{m}y_{2},-s_{m}y_{1}+c_{m}y_{2}]\qquad m=0,\dots,c\,,
\label{beta10}
\end{equation*}
where  $c_m$ and $s_m$ are the real numbers defined in eq.~\eqref{eqcmsm}. The set $\left\{y_{2m}^{q}y_{1m}^{h-q}\right\}_{q=0}^{h}$ is a basis for $H^{0}\left(\On(0,h)\right)=H^{0}\left(\pi^{*}\Ol_{\Pu}(h)\right)$ for all $h\geq1$, where $\pi\colon\Sigma_{n}\longrightarrow\Pu$ is the canonical projection. Furthermore if we call $s_{E}$ the (unique up to homotheties) global section of $\On(E)$, it induces an injection $\On(0,n)\rightarrowtail\On(1,0)$, so that the set $\left\{(y_{2m}^{q}y_{1m}^{n-q})s_{E}\right\}_{q=0}^{n}\cup\{s_{\infty}\}$ is a basis for $H^{0}\left(\On(1,0)\right)$, where $s_{\infty}$ is the section characterized by the condition $\{s_{\infty}=0\}=\li$. We get
\begin{equation}
\begin{aligned}
\alpha&=
\begin{pmatrix}
\sum_{q=0}^{n}\alpha_{1q}^{(m)}\left(y_{2m}^{q}y_{1m}^{n-q}s_{E}\right)+\alpha_{1,n+1}s_{\infty}\\[7pt]
\alpha_{20}^{(m)}y_{1m}+\alpha_{21}^{(m)}y_{2m}
\end{pmatrix}\\
\beta&=
\begin{pmatrix}
\beta_{10}^{(m)}y_{1m}+\beta_{11}^{(m)}y_{2m} &
\sum_{q=0}^{n}\beta_{2q}^{(m)}\left(y_{2m}^{q}y_{1m}^{n-q}s_{E}\right)+\beta_{2,n+1}s_{\infty}
\end{pmatrix}\,.
\end{aligned}
\end{equation}
By restricting the display of the monad $M(\alpha,\beta)$ to $\li$,   twisting by $\Ol_{\li}(-1)$ and   taking cohomology, one finds the diagram
\begin{equation}
\xymatrix{
0 \ar[r] & H^{0}(\Vi(-1)) \ar[r] & H^{0}(\A_{\infty}(-1)) \ar[d]^-{\Phi} \ar[r] & H^{1}(\Ui(-1))  \ar[r] & 0\\
& & H^{0}(\Wi(-1))
}\,,
\label{eqdiaPhi}
\end{equation}
where $\A_{\infty}=(\coker\alpha)|_{\li}$. One of the conditions that characterize $\Lk$ is the invertibility of $\Phi$ (see \cite[\S 2, condition (c4)]{bbr}). By suitably splitting the short exact sequence which appears in \eqref{eqdiaPhi}, the morphism $\Phi$ becomes
\begin{equation*}
\Phi=
\begin{cases}
\beta_{11}^{(m)}\alpha_{10}^{(m)}+\beta_{21}^{(m)}\alpha_{20}^{(m)} &\text{for $n=1$;}\\[15pt]
\left(
\begin{array}{cccc|c}
 \beta_{10}^{(m)} &  &  &  &  \beta_{11}^{(m)}\alpha_{10}^{(m)}+\beta_{21}^{(m)}\alpha_{20}^{(m)}\\
 \beta_{11}^{(m)} & \beta_{10}^{(m)} & \text{\raisebox{0pt}[0pt][0pt]{\raisebox{.25cm}{\makebox[0pt]{\hspace{.7cm}\huge0}}}} &  &  \beta_{22}^{(m)}\alpha_{20}^{(m)}\\
 & \beta_{11}^{(m)} & \ddots &  &  \vdots \\
  & \text{\raisebox{0pt}[0pt][0pt]{\raisebox{-.4cm}{\makebox[0pt]{\hspace{-.9cm}\huge0}}}} & \ddots & \beta_{10}^{(m)} &  \beta_{2,n-1}^{(m)}\alpha_{20}^{(m)}\\
 & & & \beta_{11}^{(m)} &  \beta_{2n}^{(m)}\alpha_{20}^{(m)} 
\end{array}
\right)&\text{for $n>1$.}
\end{cases}
\end{equation*}

Let us now consider the principal $\GL(r,\Com)$-bundle $\tau\colon \Pk \lra \Lk$, whose fibre over a point $(\alpha,\beta)$ is naturally identified with the space of framings for the cohomology of the monad \eqref{fundamentalmonad2}.  By inspecting the display of $M(\alpha,\beta)$,  one sees  that  fixing a framing in the fibre $\tau^{-1}(\alpha,\beta)$ is equivalent to choosing a basis for $H^{0}\left(\ker\beta|_{\li}\right)=\ker H^{0}\left(\beta|_{\li}\right)$. So, $\Pk$ can be described as the quasi-affine variety of the triples $(\alpha,\beta,\xi)$, where $(\alpha,\beta)$ is a point of $\Lk$ and $\xi\colon\Com^{r}\lra V_{\vec{k}}:=H^{0}(\Vi)$ is an injective vector space morphism such that $H^{0}\left(\beta|_{\li}\right)\circ\xi=0$.
\subsection{Proof of Proposition \ref{proopen}}
\label{proof1}
We now are in the case where $r=1$ (hence, $a=0$).
We begin by constructing the immersion $j_{m}$ for any fixed $m\in\{0,\dots,c\}$. We define the morphism
\begin{equation*}
\begin{aligned}
\tilde{\jmath}_{m}\colon\End(\Com^{c})^{\oplus 2}\oplus\Hom(\Com^{c},\Com)&\lra \Hom(\Uk,\Vk)\oplus\Hom(\Vk,\Wk)\oplus\Hom(\Com^{r},V_{\vec{k}})\\
(b_{1},b_{2},e)&\longmapsto (\alpha,\beta,\xi)
\end{aligned}
\end{equation*}
\begin{equation*}
\text{where}\qquad
\left\{
\begin{aligned}
\alpha&=
\begin{pmatrix}
\bm{1}_c(y_{2m}^{n}s_{E})+\uside{t}\mspace{2mu}b_{2}s_\infty\\
\bm{1}_cy_{1m}+\uside{t}\mspace{2mu}b_{1}y_{2m}\\
0
\end{pmatrix}\\
\beta&=
\begin{pmatrix}
\bm{1}_cy_{1m}+\uside{t}\mspace{2mu}b_1y_{2m}&
-\left(\bm{1}_c(y_{2m}^{n}s_{E})+\uside{t}\mspace{2mu}b_2s_\infty\right)&
\uside{t}es_\infty
\end{pmatrix}\\
\xi&=
\begin{pmatrix}
0\\ \vdots \\ 0 \\
1
\end{pmatrix}
\end{aligned}
\right.
\label{eqP7}
\end{equation*}
and we let $j_{m}$ be the restriction of $\tilde{\jmath}_{m}$ to $\Hc$.
\begin{lemma}\label{lmclimm}
The morphism $j_{m}$ is a $\GL(c, \Com)$-equivariant closed immersion of $\Hc$ into $P_{\vec{k},m}$.
\end{lemma}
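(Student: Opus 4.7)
The plan is to verify four things: (a) $\tilde\jmath_m$ maps $\Hc$ into $\Pk$; (b) the image actually lies in the open subset $P_{\vec{k},m}$; (c) $j_m$ is a closed immersion; (d) it is $\GL(c,\Com)$-equivariant. For (a), a direct expansion of the matrix product shows that $\beta\alpha=0$ reduces to $\uside{t}b_1\cdot\uside{t}b_2=\uside{t}b_2\cdot\uside{t}b_1$, which is the transpose of (T1). Identifying the coefficient matrices in the canonical presentation of $\Lk$, one reads off $\beta_{10}^{(m)}=\bm{1}_c$ and $\beta_{11}^{(m)}=\uside{t}b_1$, so the matrix $\Phi$ displayed just before this Lemma becomes block lower-triangular with $\bm{1}_c$ on the diagonal and is therefore invertible. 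The remaining open conditions of \cite[\S2]{bbr} that pin down $\Lk$ translate, under this ansatz, into a ``no common kernel'' statement equivalent to (T2); the framing datum $\xi$ satisfies $H^0(\beta|_{\li})\circ\xi=0$ by inspection, because the components of $\beta$ that do not vanish on $\li$ annihilate the last standard basis vector that $\xi$ picks out. For (b), I would restrict $M(\alpha,\beta)$ to $f_m$: since $f_m=\{y_{2m}=0\}$ in the chosen coordinates, the second component of $\alpha|_{f_m}$ is the identity $\bm{1}_cy_{1m}$, and a short computation identifies the cohomology of the restricted complex with $\Ol_{f_m}$, so $[(\E,\theta)]$ belongs to $\M^n(c)_m$.

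For (c), injectivity of $j_m$ is transparent: $b_1$, $b_2$ and $e$ are (up to transposition) precisely the coefficients of $y_{2m}$ in $\beta_1$, of $s_\infty$ in $\alpha_1$, and of $s_\infty$ in $\beta_2$ respectively, so there is a polynomial left inverse. Hence $\tilde\jmath_m$ is a closed immersion of affine spaces, and restricting to the open subvariety $\Hc\subset\End(\Com^c)^{\oplus 2}\oplus\Hom(\Com^c,\Com)$ with image in the open subvariety $P_{\vec{k},m}\subset\Pk$ (guaranteed by (b)) preserves the property. For (d), the action $(b_1,b_2,e)\mapsto(\phi b_1\phi^{-1},\phi b_2\phi^{-1},e\phi^{-1})$ transposes to $\uside{t}b_i\mapsto\uside{t}\phi^{-1}\,\uside{t}b_i\,\uside{t}\phi$ and $\uside{t}e\mapsto\uside{t}\phi^{-1}\,\uside{t}e$, which is exactly the action of $\iota(\phi)$ from \eqref{eqiota} on the triple $(\alpha,\beta,\xi)$; equivariance follows.

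The main obstacle is step (a), and more precisely the translation of the full list of conditions from \cite[\S2]{bbr} characterizing $\Lk$ into the single stability condition (T2). This is the analogue of the classical ADHM non-degeneracy check on $\Com^2$ and involves a careful case analysis along the fibres of the ruling $\Sigma_n\to\Pu$: on $f_m$ triviality is automatic from the structure of our ansatz, while on the other fibres $[y_1,y_2]\neq[c_m,s_m]$ one needs to rule out common kernels of the relevant morphisms, which is exactly where (T2) enters. Everything else --- injectivity, closedness, equivariance --- is bookkeeping.
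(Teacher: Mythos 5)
Your steps (a), (b) and (d) are sound and match the substance of the paper's argument: the computation $\beta\circ\alpha=\bigl(\uside{t}\mspace{2mu}b_1\,\uside{t}\mspace{2mu}b_2-\uside{t}\mspace{2mu}b_2\,\uside{t}\mspace{2mu}b_1\bigr)y_{2m}s_\infty$, the triangular form of $\Phi$, the translation of the fibrewise rank conditions away from $\li\cup f_m$ into (T2), the triviality of $\E|_{f_m}$ forced by $\beta_1|_{f_m}=\bm{1}_c$, and the matching of the $\GL(c,\Com)$-action with $\iota(\phi)$ are all correct; together they establish $\im j_m\subseteq \im\tilde{\jmath}_m\cap P_{\vec{k},m}$, which is the ``opposite inclusion'' in the paper's proof. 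The genuine gap is in step (c). The principle you invoke --- that a closed immersion restricted to subvarieties of source and target with compatible images remains a closed immersion --- is false: if $f\colon X\to Y$ is a closed immersion and $U\subseteq X$, $V\subseteq Y$ satisfy $f(U)\subseteq V$, then $f|_U\colon U\to V$ is only an immersion, and its image need not be closed in $V$ (take $f=\id_{\Com}$, $U=\Com\setminus\{0\}$, $V=\Com$). Note also that $\Hc$ is not open in $\End(\Com^c)^{\oplus 2}\oplus\Hom(\Com^c,\Com)$, since (T1) is a closed condition; it is only locally closed, which makes the claimed shortcut even less available.

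What is missing is precisely the other inclusion, $\im\tilde{\jmath}_m\cap P_{\vec{k},m}\subseteq\im j_m$, i.e.\ the statement that $\Hc=\tilde{\jmath}_m^{-1}(P_{\vec{k},m})$: if $(\alpha,\beta,\xi)=\tilde{\jmath}_m(b_1,b_2,e)$ lies in $P_{\vec{k},m}$, then $\beta\circ\alpha=0$ forces (T1) and the pointwise maximal rank of $\beta$ forces (T2), so $(b_1,b_2,e)\in\Hc$. Granting this, $\im j_m=\im\tilde{\jmath}_m\cap P_{\vec{k},m}$ is closed in $P_{\vec{k},m}$ because $\im\tilde{\jmath}_m$ is closed in the ambient linear space, and $j_m$ is the base change of the closed immersion $\tilde{\jmath}_m$ along the inclusion $P_{\vec{k},m}\hookrightarrow\Hom(\Uk,\Vk)\oplus\Hom(\Vk,\Wk)\oplus\Hom(\Com^{r},V_{\vec{k}})$, hence a closed immersion. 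This converse translation is short, but it cannot be omitted: closedness of $\im j_m$ is exactly what Lemma \ref{lemmaquot} needs when it is applied to deduce Proposition \ref{proopen}, so your argument as written does not yet prove the Lemma.
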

\begin{proof}
Since it is clear that $\tilde{\jmath}_{m}$ is a closed immersion, it is enough to prove that
\begin{equation*}
\im\tilde{\jmath}_{m}\cap P_{\vec{k},m}=\im j_{m}\,.
\end{equation*}

Let $(\alpha,\beta,\xi)=\tilde{\jmath}_{m}(b_{1},b_{2},e)$ be any point in the intersection $\im\tilde{\jmath}_{m}\cap P_{\vec{k},m}$;
the equation $\beta\circ\alpha=0$ implies that the triple $(b_{1},b_{2},e)$ satisfies the condition (T1), while the fact that $\beta\otimes k(x)$ has maximal rank for all $x\in\Sigma_{n}$ implies   condition (T2). It follows that
\begin{equation*}
\im\tilde{\jmath}_{m}\cap P_{\vec{k},m}\subseteq\im j_{m}\,.
\end{equation*}
To get the opposite inclusion, note that for all $(\alpha,\beta,\xi)\in\im\tilde{\jmath}_{m}$ the following conditions are satisfied:
\begin{itemize}
\item[(\textit{i})] the morphism $\alpha\otimes k(x)$ fails to have maximal rank at most at a finite number of points $x\in\Sigma_{n}$; hence, $\alpha$ is injective;
\item[(\textit{ii})] the morphisms $\alpha\otimes k(x)$ and $\beta\otimes k(x)$ have maximal rank for all points $x\in\li\cup f_{m}$;
\item[(\textit{iii})] the morphism $\Phi$ is invertible;
\item[(\textit{iv})] one has $\beta_{1}|_{f_{m}}=\bm{1}_{c}$;
\item[(\textit{v})] the morphism $\xi$ has maximal rank.
\end{itemize}
If $(\alpha,\beta,\xi)\in\im j_{m}$, the condition (T2) implies that $\beta\otimes k(x)$ has maximal rank for all $x\in\Sigma_{n}\setminus(\li\cup f_{m})$: by the condition (\textit{ii}) this is sufficient to ensure that $\beta$ is surjective. Condition (T1) implies that $\beta\circ\alpha=0$, so that we can define the quotient sheaf $\E=\ker\beta/\im\alpha$. By condition (\textit{i}) $\E$ is torsion free, by conditions (\textit{ii}) and (\textit{iii}) it is trivial at infinity, and by condition (\textit{iv}) $\E|_{f_{m}}$ is trivial as well.
The $\GL(c, \Com)$-equivariance of $j_{m}$ is readily checked.
\end{proof}
Lemma \ref{lemmaquot} will now allow us to prove that  $j_{m}$ induces an isomorphism between the quotients of $\Hc$ and $P_{\vec{k},m}$ under the actions of $\GL(c, \Com)$ and $\GL(c, \Com)^{\times 2}$, respectively. Thus, we have to show that for any $\Gk$-orbit $O_{\Gk}$ in $P_{\vec{k},m}$ the intersection $O_{\Gk}\cap\im j_{m}$ is not empty and that its stabilizer  in $\Gk$ coincides with $\im\iota$. To this aim, we build up a strictly descending chain of closed subvarieties
\begin{equation*}
\Pkm=:P^0\supsetneqq P^1 \supsetneqq \cdots \supsetneqq P^h=\im j_{m}\,,
\end{equation*}
for a certain $h>0$, such that there exists a strictly descending chain of subgroups
\begin{equation*}
\Gk=:G^0\supsetneqq G^1 \supsetneqq \cdots \supsetneqq G^h=\im\iota
\end{equation*}
with the property that $G^{i}$ is the stabilizer inside $\Gk$ of the intersection $O_{\Gk}\cap P^{i}$ for all $\Gk$-orbits in $P_{\vec{k},m}$.

Note that for each point $(\alpha,\beta,\xi)\in\Pk$ one has an exact sequence
\begin{equation*}
\xymatrix{
0 \ar[r] & \E \ar[r] & \On \ar[r] & \Ol_{Z} \ar[r] & 0
}
\end{equation*}
where $\E=\E_{\alpha,\beta}$ and $Z$ is the singular locus of $\E$. If we restrict this sequence to $f_{m}$,   twist it by $\Ol_{f_{m}}(-1)$ and take cohomology, we find out that $Z\cap f_{m}=\emptyset$ if and only if $H^{i}(\E|_{f_{m}}(-1))=0$ for $i=0,1$. By using the display of the monad $M(\alpha,\beta)$ one sees that this condition is equivalent to the condition $\det(\beta_{10}^{(m)})\neq0$ (the coefficient $\beta_{10}^{(m)}$ is defined in eq.~\eqref{beta10}).

By acting with $\Gk$ on $(\alpha,\beta,\xi)$ we can assume that
\begin{equation*}
\left\{
\begin{aligned}
\beta_{10}^{(m)}&=\bm{1}_{c}\\
\beta_{2q}^{(m)}&=0\qquad q=0,\dots,n-1\,.
\end{aligned}
\right.
\end{equation*}
These equations define the subvariety $P^{1}$, whose stabilizer $G^{1}$ is the subgroup of $\Gk$ determined by
the conditions 
$\psi_{11} =\chi$ and
$\psi_{12} =0$. 
Let $\uside{t}\mspace{2mu}b_1:=\beta_{11}^{(m)}$.
The equation $\beta\circ\alpha=0$ implies that
\begin{align}
\alpha_{1q}^{(m)}&=0\qquad q=0,\dots,n-1
\notag
\\
\alpha_{1n}^{(m)}&=-\beta_{2n}^{(m)}\alpha_{20}^{(m)}\,.
\label{eq3}
\end{align}
The invertibility of $\Phi$ is equivalent to the condition $\det\alpha_{1n}^{(m)}\neq0$, and by acting with $G^{1}$ we can assume that $\alpha_{1n}^{(m)}=\bm{1}_c$. This equation cuts the subvariety $P^{2}$ inside $P^{1}$, and the stabilizer $G^{2}$  is  the subgroup of $G^{1}$ where $\chi=\phi$.

From eq.~\eqref{eq3} we deduce that 
\begin{align}
\bm{1}_c&=-\beta_{2n}^{(m)}\alpha_{20}^{(m)}\,,\qquad\text{so that}\qquad \rk\beta_{2n}^{(m)}=\rk\alpha_{20}^{(m)}=c\,.
\label{eq4}
\end{align}
Therefore, by acting with $G^{2}$ we can assume that
\begin{equation*}
\alpha_{20}^{(m)}=
\begin{pmatrix}
\bm{1}_c\\
0
\end{pmatrix}\,.
\end{equation*}
This equation cuts the subvariety $P^{3}$ inside $P^{2}$, and the stabilizer $G^{3}$ is the subgroup of $G^{2}$,  where
\begin{equation*}
\psi_{22}=
\begin{pmatrix}
\phi & g_{12}\\
0 & g_{22}
\end{pmatrix}
\end{equation*}
for some $g_{12}\in\Hom(\Com,\Com^{c})$ and $g_{22}\in\Com^{*}$.

Eq.~\eqref{eq4} implies that $\beta_{2n}^{(m)}$ is of the form $\begin{pmatrix}
-\bm{1}_c & *
\end{pmatrix}$,
but by acting with   $G^{3}$ we can assume that $\beta_{2n}^{(m)}=
\begin{pmatrix}
-\bm{1}_c & 0
\end{pmatrix}$. This equation characterizes $P^{4}$ inside $P^{3}$, and the stabilizer $G^{4}$ is the subgroup of $G^{3}$ where $g_{12}=0$.
The equation $H^{0}\left(\beta|_{\li}\right)\circ\xi=0$ implies that
\begin{equation*}
\xi^{(m)}=
\begin{pmatrix}
0\\
\theta^{-1}
\end{pmatrix}\,.
\end{equation*}
By acting with $G^{4}$ we can assume that $\theta=1$: this cuts $P^{5}$ inside the variety $P^{4}$, and the stabilizer $G^{5}$ is the subgroups of $G^{4}$ where $g_{22}=1$. It is not difficult to show that $G^{5}$ concides with $\im\iota$.
To prove that $P^{5} = \im j_{m}$ we use once more  
the constraint $\beta\circ\alpha=0$ and get the system
\begin{equation*}
\left\{
\begin{aligned}
\uside{t}\mspace{2mu}b_1+
\begin{pmatrix}
-\bm{1}_c & 0
\end{pmatrix}
\alpha_{21}^{(m)}&=0\\
\alpha_{1,n+1}+\beta_{2,n+1}
\begin{pmatrix}
\bm{1}_c\\
0
\end{pmatrix}
&=0\\
\beta_{11}^{(m)}\alpha_{1,n+1}+\beta_{2,n+1}\alpha_{21}^{(m)}&=0\,.
\end{aligned}
\right.
\end{equation*}
From the first two equations we deduce that
\begin{equation*}
\alpha_{21}^{(m)}=
\begin{pmatrix}
\uside{t}\mspace{2mu}b_1\\
\uside{t}e_2
\end{pmatrix}
\qquad\text{and}\qquad\beta_{2,n+1}=
\begin{pmatrix}
-\alpha_{1,n+1} & \uside{t}e
\end{pmatrix}
\end{equation*}
for some $e\in\Hom(\Com^{c},\Com)$ and $e_{2}\in\Hom(\Com,\Com^{c})$. Only the last equation is not identically satisfied, and is equivalent to
\begin{equation}
\uside{t}\mspace{2mu}b_1\uside{t}\mspace{2mu}b_{2}-\uside{t}\mspace{2mu}b_{2}\uside{t}\mspace{2mu}b_1+\uside{t}e\uside{t}e_2=0\,,
\label{eqX'}
\end{equation}
where we have put $\uside{t}\mspace{2mu}b_{2}=\alpha_{1,n+1}$. Since the morphism $\beta\otimes k(x)$ has maximal rank for all $x\in\Sigma_{n}$, the quadruple $\left({}^{t}b_{1},{}^{t}b_{2},{}^{t}e,{}^{t}e_{2}\right)$ satisfies the hypotheses of \cite[Proposition 2.8]{Nakabook}, which implies $e_{2}=0$. It follows that $P^{5}=\im j_{m}$.

 \subsection{Proof of Proposition \ref{pro1}}
\label{proof2}
\begin{lemma}
For any $l,m=0,\dots,c$ and for any point $\vec{b}_{m}=(b_{1m},b_{2m},e_{m})\in \Hc_{m}$, there exists a unique element $\psi_{l}(\vec{b}_{m})=(\phi,\psi,\chi)\in\Gk$ such that
\begin{itemize}
\item
 $\chi=\bm{1}_{c}$;
\item
the point $(\alpha',\beta',\xi')=\psi_{l}(\vec{b}_{m})\cdot j_{m}(\vec{b}_{m})$ lies in the image of $j_{l}$.
\end{itemize}
If we set $(b_{1l},b_{2l},e_{l})=j_{l}^{-1}(\alpha',\beta',\xi')$, we have
\begin{equation}
\left\{
\begin{aligned}
b_{1l}&=\left(c_{m-l}\bm{1}_c-s_{m-l}b_{1m}\right)^{-1}\left(s_{m-l}\bm{1}_c+c_{m-l}b_{1m}\right)\\
b_{2l}&=\left(c_{m-l}\bm{1}_c-s_{m-l}b_{1m}\right)^{n}b_{2m}\\
e_{l}&=e_{m}\,.
\end{aligned}
\right.
\label{eqbbel}
\end{equation}
\end{lemma}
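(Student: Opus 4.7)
The plan is to construct the element $\psi_l(\vec{b}_m) = (\phi,\psi,\chi) \in \Gk$ explicitly by applying, to the point $j_m(\vec{b}_m)$ re-expressed in the $l$-coordinate frame, the same normalization cascade used to define $j_l$ in the proof of Proposition \ref{proopen}. Uniqueness is essentially immediate from that proof: it was shown there that $\im j_l$ meets every $\Gk$-orbit in $P_{\vec{k},l}$ in a unique $\im\iota$-coset, and the constraint $\chi = \bm{1}_c$ pins down a unique representative in this coset, since $\iota(\phi)$ has $\chi$-component ${}^t\phi^{-1}$, so $\chi = \bm{1}_c$ forces $\phi = \bm{1}_c$.

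For the explicit formulas, I would first re-expand $j_m(\vec{b}_m) = (\alpha,\beta,\xi)$ in the $l$-basis of sections, using the rotation $y_{1m} = c_{m-l}y_{1l} + s_{m-l}y_{2l}$, $y_{2m} = -s_{m-l}y_{1l} + c_{m-l}y_{2l}$, together with the change-of-basis matrix $\sigma^{n}_{m-l}$ from \eqref{eqsigma} governing degree-$n$ monomials. A direct calculation yields $\beta_{10}^{(l)} = c_{m-l}\bm{1}_c - s_{m-l}\,{}^t\mspace{-1mu}b_{1m}$, which is invertible precisely by the hypothesis $\vec{b}_m \in \Hc_{m,l}$. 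I would then execute the normalization cascade from the proof of Proposition \ref{proopen}, subject to $\chi = \bm{1}_c$: normalize $\beta_{10}^{(l)} \to \bm{1}_c$ (which fixes $\phi$ and $\psi_{11}$) and simultaneously annihilate the cross-terms $\beta_{2q}^{(l)}$ for $q < n$ using the off-diagonal block $\psi_{12}$ (whose freedom relies on $H^0(\On(1,-1)) \neq 0$, stemming from the injection $\On(0,n-1)\hookrightarrow\On(1,-1)$ induced by $s_E$); then successively normalize $\alpha_{1n}^{(l)}$, $\alpha_{20}^{(l)}$, $\beta_{2n}^{(l)}$ and the framing $\xi^{(l)}$. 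The resulting triple lies in $\im j_l$, and reading off $\beta_{11}^{(l)}$, $\alpha_{1,n+1}^{(l)}$ and the $s_\infty$-entry of $\beta_{2,n+1}^{(l)}$ gives ${}^t\mspace{-1mu}b_{1l}$, ${}^t\mspace{-1mu}b_{2l}$ and ${}^t\mspace{-1mu}e_l$.

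The Cayley-type expression for $b_{1l}$ is then immediate from the transformation of $\beta_{11}^{(l)}$ under $\phi\cdot\psi_{11}^{-1}$, once these are determined. The equality $e_l = e_m$ reflects that $s_\infty$ is intrinsic to $\li$ and that the block of $\psi$ acting on the framing summand of $\Vk$ is forced to be trivial (as $\chi = \bm{1}_c$ and the subsequent $\xi^{(l)}$-normalization conspire to leave this coefficient untouched). The more subtle $n$-th power in $b_{2l} = (c_{m-l}\bm{1}_c - s_{m-l}b_{1m})^n\, b_{2m}$ arises from the combination of the degree-$n$ expansion $y_{2m}^n \mapsto \sum_k \binom{n}{k}(-s_{m-l})^{n-k}c_{m-l}^k\, y_{1l}^{n-k}y_{2l}^k$, the absorption of the lower-order terms by the off-diagonal block $\psi_{12}$, and the identity $\alpha_{1n}^{(l)} = -\beta_{2n}^{(l)}\alpha_{20}^{(l)}$ forced by $\beta\circ\alpha = 0$ after the first normalization step. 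The principal technical hurdle is the bookkeeping through this cascade: one must verify that the choice of $\psi_{12}$ needed to kill the cross-terms $\beta_{2q}^{(l)}$ does not disturb the already-normalized $\beta_{10}^{(l)}$, and that the propagation of $\psi$ through $\alpha_1^{(l)}$, combined with the $\beta\circ\alpha=0$ constraint, yields exactly the $n$-th power rather than some other polynomial in $c_{m-l}\bm{1}_c - s_{m-l}b_{1m}$.
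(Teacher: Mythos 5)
Your proposal follows essentially the same route as the paper: both re-expand $j_m(\vec{b}_m)$ in the $[y_{1l},y_{2l}]$-frame and then determine the unique element $(\phi,\psi,\bm{1}_c)\in\Gk$ carrying the result into $\im j_{l}$ --- the paper simply writes this element down in closed form, while you derive it by rerunning the normalization cascade from the proof of Proposition \ref{proopen} --- after which the formulas for $(b_{1l},b_{2l},e_{l})$ are read off in the same way. Your reading of the hypothesis as $\vec{b}_m\in\Hc_{m,l}$, which guarantees the invertibility of $c_{m-l}\bm{1}_c-s_{m-l}b_{1m}$, is exactly what the paper's argument uses.
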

\begin{proof}
If we set $(\alpha,\beta,\xi)=j_{m}(\vec{b}_{m})$, by expressing  $[y_{1m},y_{2m}]$ as functions of $[y_{1l},y_{2l}]$ we get
\begin{equation*}
\begin{aligned}
\alpha&=
\begin{pmatrix}
\sum_{q=0}^{n}(\sigma_{q}\bm{1}_{c})(y_{2l}^{q}y_{1l}^{n-q}s_{E})+\uside{t}\mspace{2mu}b_{2m}s_\infty\\
d_{1m}y_{1m}+d_{2m}y_{2m}\\
0
\end{pmatrix}\,,\\
\beta&=
\begin{pmatrix}
d_{1m}y_{1m}+d_{2m}y_{2m}&
-\sum_{q=0}^{n}(\sigma_{q}\bm{1}_{c})(y_{2l}^{q}y_{1l}^{n-q}s_{E})-\uside{t}\mspace{2mu}b_{2m}s_\infty&
\uside{t}e_{m}s_\infty
\end{pmatrix}\,,
\end{aligned}
\end{equation*}
where
\begin{equation*}
d_{1m}=c_{m-l}\bm{1}_c-s_{m-l}\uside{t}\mspace{2mu}b_{1m}\qquad\qquad d_{2m}=s_{m-l}\bm{1}+c_{m-l}\uside{t}\mspace{2mu}b_{1m}
\end{equation*}
and we have put $\sigma_{q}=\sigma^{n}_{l-m;nq}$ for $q=0,\dots,n$ (see eq. \eqref{eqsigma}). The explicit form of $\psi_{l}(\vec{b}_{m})$ is obtained by imposing the equality 
\begin{equation}
(\phi,\psi,\bm{1}_{c})\cdot(\alpha,\beta,\xi)=j_{l}(b_{1l},b_{2l},e_{l})
\label{eqbpsi}
\end{equation}
for some $(b_{1l},b_{2l},e_{l})\in \Hc_{l}$. One gets
\begin{gather*}
\begin{aligned}
\phi&=d_{1m}^{-(n-1)}\\
\psi&=
\begin{pmatrix}
d_{1m} & \psi_{12,1} & 0\\
0 & d_{1m}^{-n} & 0\\
0 & 0 & \bm{1}_{r}
\end{pmatrix}\,,
\end{aligned}
\\[5pt]
\text{where}\qquad\psi_{12,1}=-\sum_{q=0}^{n-1}\sum_{p=0}^{q}\sigma_{q-p}\left(-d_{2m}d_{1m}^{-1}\right)^{p}y_{1l}^{q}y_{2l}^{n-1-q}\,.
\end{gather*}
Eq.~\eqref{eqbbel} follows    from eq.~\eqref{eqbpsi}.
\end{proof}
Since $j_{m}$ and $j_{l}$ are injective, the map $\vec{b}_{m}\longmapsto\psi_{l}(\vec{b}_{m})\cdot\vec{b}_{m}$ induces the morphism $\tilde{\varphi}_{lm}$ in eq.~\eqref{eqfiml}. This completes the proof of Proposition \ref{pro1}.

\frenchspacing\bigskip
 %
%
%
 
 \def\cprime{$'$} \def\cprime{$'$} \def\cprime{$'$} \def\cprime{$'$}

\end{document}